\documentclass[11pt,letterpaper]{amsart}
\textwidth=360pt \textheight=615pt
\usepackage{fancyhdr}
\pagestyle{fancy}
\fancyhf{}
\fancyhead[C]{%
\ifodd\value{page}\relax
{\hspace{90 pt}Smooth blow up structures on projective bundles}
\else
\small{\hspace{-90 pt} Sarkar S.}
\fi}
\fancyfoot[C]{\thepage}

\usepackage{bm}
\setlength{\headheight}{14.49998pt}
\usepackage{hyperref}
\usepackage{graphicx} 
\usepackage{nicematrix}
\usepackage{amsthm,amssymb,amsmath}
\usepackage[T1]{fontenc}
\usepackage[utf8]{inputenc}
\usepackage{hyperref}
\usepackage{lipsum}
\usepackage{mathrsfs}
\usepackage{enumitem}
\usepackage{stmaryrd}
\usepackage{setspace}
\usepackage{yfonts}
\usepackage{float}
\usepackage{mathtools}
\usepackage{parskip}
\usepackage[all,cmtip]{xy}
\usepackage{tikz-cd}
\tikzcdset{row sep/normal=50pt, column sep/normal=50pt}

\newtheorem{lemma}{Lemma}[section]
\newtheorem{remark}[lemma]{Remark}
\newtheorem{theorem}[lemma]{Theorem}
\newtheorem{theorem*}{Theorem}
\newtheorem{example}[lemma]{Example}
\newtheorem{example*}[lemma]{Example}

\newenvironment{manualtheorem}[1]{%
  \manualtheoreminner
}{\endmanualtheoreminner}
\usepackage{blindtext}
\usepackage[a4paper, total={6in, 8in}]{geometry}
\usepackage[backend=biber, style=numeric, sorting=nyt]{biblatex}
\addbibresource{name.bib}
\title{Smooth blow up structures on projective bundles over projective spaces}
\author{\small{Supravat Sarkar}}
\date{}
\begin{document}
\maketitle
\begin{abstract}
    Assuming Hartshorne's conjecture on complete intersections, we classify projective bundles over projective spaces which has a smooth blow up structure over another projective space. Under some assumptions, we also classify projective bundles over projective spaces which has a smooth blow up structure over some arbitrary smooth projective variety, not necessarily a projective space. We verify which of the globally generated vector bundles over projective space of first Chern class at most five has the property that their projectivisation has a smooth blow up structure, with no additional assumption. In the way, we get some new examples of varieties with both projective bundle and smooth blow up structures.
\end{abstract}
\begin{center}
\textbf{Keywords}: Blow up, projective bundle, Bordiga 3-fold, Tango bundle
\end{center}
\begin{center}
\textbf{MSC Number: 14M07} 
\end{center}

\section{Introduction}
 Given a smooth projective variety $X$, there are two standard ways of constructing another smooth projective variety with Picard number one more than $X$. One is to construct a projective bundle over $X$, another is to blow-up a smooth subvariety of codimension at least $2$ in $X$. It is interesting to consider when a smooth projective variety can be constructed by the above procedure in two different ways, in other words, when a smooth projective variety has two different structures of the above kind. We are particularly interested in the case of smooth projective varieties of Picard rank $2$ having two different such structures.
 
  There are a few classes of examples of such varieties in literature. In \cite{watanabe2014p1},\cite{occhetta2022manifolds} there are examples of smooth projective varieties having two projective bundle structures. In \cite{sarkar2024varieties},\cite{ein1989some}, \cite{crauder1989cremona}, \cite{crauder1991cremona} there are examples of smooth projective varieties having two smooth blow up structures. In \cite{bansal2024extremal},\cite{vats2023correspondence},\cite{galkin2022projective}, \cite{ray2020projective}, there are examples of smooth projective varieties having both projective bundle and smooth blow up structures. In \S 3 of this paper, we give a few examples of projective bundles over projective spaces which has a smooth blow up structure. Some of them have already appeared in literature, some of them are new.

  Next, we turn our attention to the classification of such varieties. \cite{sato1985varieties} classifies varieties with two projective bundle structures over projective spaces. \cite{sarkar2024varieties} gives a satisfactory answer towards the classification of varieties with two smooth blow up structures over projective spaces. In this paper, we tackle the remaining case, of varieties with both projective bundle and smooth blow up structures over projective spaces.

  \begin{manualtheorem}{A} \label{theorem A} Let $n,r$ be positive integers, $X$ a smooth projective variety with two contractions $\pi: X\to \mathbb{P}^n$, $\phi: X\to \mathbb{P}^{n+r}$ such that $\pi$ is a projective bundle structure and $\phi$ is a smooth blow up along a nonlinear subvariety. Let $d\geq 2$ be the integer such that the rational map $\pi \circ \phi^{-1}:\mathbb{P}^{n+r}\dashrightarrow \mathbb{P}^{n}$ is defined by an $(n+1)$-tuple of homogeneous polynomials of degree $d$.
\begin{enumerate}
    \item If $d=2$, then $X$ is as in example \ref{1} or \ref{2} of \S 3.
    \item If $d\geq 3$ then either $d=3$ and $X$ is as in example \ref{3} of \S 3, or $r=n/d$ and $W$ has same Betti numbers as $\mathbb{P}^{n-1}.$
    \item If Hartshorne's conjecture on complete intersection holds then the 2nd case in $(2)$ is not possible, so $X$ is always as in one of the examples of \S 3.
\end{enumerate}
\end{manualtheorem}
By Hartshorne's conjecture on complete intersection, we mean the following statement: If $W$ a smooth subvariety of $\mathbb{P}^n$ of dimension $>2n/3$, then $W$ is a complete intersection.

Next, we want to classify all projective bundles over projective spaces which has a smooth blow up structure. This smooth blow up structure can be  over any smooth projective variety, not necessarily a projective space. To the author's best knowledge, there is no known example other than the examples in \S 3. We prove this in Theorem \ref{theorem:B} under several assumptions.
As in \cite{anghel2013globally}, for a globally generated vector bundle $F$ on $\mathbb{P}^n$ let $P_F$ be the dual of the kernel of the evaluation morphism $H^0(F)\otimes \mathcal{O}_{\mathbb{P}}\to F.$
Let $M_{n,c}$ be the following statement

$M_{n,c}:$ If $E$ is a globally generated vector bundle on $\mathbb{P}^n$ with first Chern class $\leq$ min$\{c,n-1\}$ and $H^0(E^*)=H^1(E^*)=0$, then one of the following holds:
\begin{enumerate}
    \item $E\cong A\oplus P_B$, where $A, B$ are globally generated split bundles.
    \item $n\geq 3$ and $E\cong\Omega_{\mathbb{P}^n}(2).$
         \item $n\geq 4$ and $E=\wedge^2(T_{\mathbb{P}^n}(-1))$.
\end{enumerate}
By \textit{split} bundle, we mean a direct sum of line bundles.

In {\cite[Conjecture 0.3]{anghel2013globally}}, it is conjectured that $M_{n,c}$ is true for all $n$ and $c$. By \cite{anghel2013globally},\cite{anghel2018globally}, \cite{anghel2020globally}, $M_{n,5}$ is true for all $n$.

Now we can state our result.
\begin{manualtheorem}{B} \label{theorem:B}

    Let $n,r$ be positive integers, $X$ a smooth projective variety which is a $\mathbb{P}^r$-bundle over $\mathbb{P}^n$, and also has a smooth blow up structure over a smooth projective variety $Y$. Let $\mathcal{E}$ be the unique nef vector bundle of rank $r+1$  over $\mathbb{P}^n$ such that $X\cong \mathbb{P}_{\mathbb{P}^n}(\mathcal{E})$ over $\mathbb{P}^n$ and $\mathcal{E}(-1)$ is not nef. Suppose $\mathcal{E}$ is not ample. Then $c_1(\mathcal{E})\leq n$ and the following holds:
    \begin{enumerate}
        \item Suppose $\mathcal{E}$ is globally generated, equivalently, the ample generator of $Y$ is globally generated. If $M_{n,c_1(\mathcal{E})}$ is true, then either $X$ is one of the examples in \S 3 or $c_1=n$, codim $W=2$, and if $0\to \mathcal{O}_{\mathbb{P}^n}^k\to F\to \mathcal{E}\to 0 $ a short exact sequence of vector bundles on $\mathbb{P}^n$ with $H^0(F^*)=H^1(F^*)=0$, then $F$ is indecomposable.
        \item Suppose the ample generator of $Y$ is very ample. Then either $X$ is one of the examples in \S 3 or rk $\mathcal{E}\leq n$.
    \end{enumerate}
\end{manualtheorem}

The nonampleness of $\mathcal{E}$ is an assumption made in the previous works also, see for example {\cite[Lemma 2.5]{li2024projective}}, also the equivalent conditions in {\cite[Proposition 5]{li2021projective}} in our setup is equivalent to nonampleness of $\mathcal{E}$.

Finally, we classify the projectivizations of globally generated vector bundles over projective spaces of first Chern class at most $5$ which has a smooth blow up structure, with no additional assumptions. This is an extension of {\cite[Theorem E]{bansal2024extremal}}.
\begin{manualtheorem}{C} \label{theorem:C}
    Let $n$ be a positive integer and $\mathcal{E}$ a globally generated vector bundle on $\mathbb{P}^n$ with $c_1(\mathcal{E})\leq 5$. If $X=\mathbb{P}_{\mathbb{P}^n}(\mathcal{E})$ is a smooth blow up, then $X$ is one of the examples in \S 3.
\end{manualtheorem}
As mentioned in \cite{bansal2024extremal}, this result is a counterpart of the results in \cite{li2021projective} and \cite{li2024projective} in the sense that \cite{li2021projective} and \cite{li2024projective} classify when the blow-up of a variety of small dimensions in projective space has a projective bundle structure, and our result classifies when the projectivization of a globally generated vector bundle of small first Chern class over projective space has a smooth blow up structure.

 The same method in this paper also classifies projective bundles over projective spaces that have another projective bundle structure, under some assumptions and assuming $M_{n,c_1}$ (see Remark \ref{rmk}). For some previously obtained results regarding these varieties, see \cite{lanteri1988projective}.

\section{Notations, conventions and lemmas}
We work throughout over the field $k=\mathbb{C}$ of complex numbers. A \textit{variety} is an integral, separated scheme of finite type over $k$. By subvariety we always mean a closed subvariety. By  \textit{smooth blow up} we mean blow up of a smooth variety along a smooth subvariety. For positive integers $m,n$, $Gr(m,n)$ denotes the Grassmanian variety of $m$-dimensional linear subspaces of $k^n.$ If $\mathcal{E}$ is a vector bundle of rank $\geq 2$ on $\mathbb{P}^n$ and the $R$ is the extremal ray of $\mathbb{P}(\mathcal{E})$ which is not contracted by the projective bundle map, then the contraction of $R$, if it exists, will be called the \textit{other contration} of $\mathbb{P}(\mathcal{E})$.

We shall need the following lemmas.
\begin{lemma}\label{lemma:fiber}
     Let $n,r$ be positive integers and for $1\leq i\leq r$, $E_i$ a globally generated vector bundle over $\mathbb{P}^n$. Let $V_i=H^0(\mathbb{P}^n, E_i)$. Let $\phi:\mathbb{P}_{\mathbb{P}^n}(\oplus_iE_i)\to\mathbb{P}(\oplus_iV_i)$ be the morphism given by $|\mathcal{O}_{\mathbb{P}(\oplus_iE_i)}(1)|$, $\pi:\mathbb{P}_{\mathbb{P}^n}(\oplus_iE_i)\to\mathbb{P}^n$ the projection. For $1\leq i\leq r$, let $\phi_i:\mathbb{P}_{\mathbb{P}^n}(E_i)\to\mathbb{P}(V_i)$ be the morphism given by $|\mathcal{O}_{\mathbb{P}(E_i)}(1)|$, $\pi_i:\mathbb{P}_{\mathbb{P}^n}(E_i)\to\mathbb{P}^n$ the projection. Let $v_i\in V_i^*$ for $1\leq i\leq r$, $v=(v_i)_i\in (\oplus_iV_i)^*$. Then we have $\phi^{-1}([v])\cong \pi\phi^{-1}([v])=\cap_i S_i$, where \begin{equation*}
         S_i=\begin{cases}
         \pi_i\phi_i^{-1}([v_i]) & \quad \text{if } v_i\neq 0\\
         \mathbb{P}^n & \quad \text{if } v_i= 0.
     \end{cases}
     \end{equation*}
 \end{lemma}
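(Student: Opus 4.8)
The plan is to realize the entire configuration inside a product and read everything off the projections. Write $E:=\oplus_i E_i$; since each $E_i$ is globally generated so is $E$, and $H^0(E)=\oplus_i V_i=:V$ because $\pi_*\mathcal{O}_{\mathbb{P}(E)}(1)=E$. First I would record the evaluation surjection $V\otimes\mathcal{O}_{\mathbb{P}^n}\twoheadrightarrow E$, which (in the Grothendieck convention used here) induces a closed embedding $j:\mathbb{P}_{\mathbb{P}^n}(E)\hookrightarrow\mathbb{P}_{\mathbb{P}^n}(V\otimes\mathcal{O}_{\mathbb{P}^n})=\mathbb{P}^n\times\mathbb{P}(V)$ over $\mathbb{P}^n$, with $\mathcal{O}_{\mathbb{P}(E)}(1)=j^*\mathrm{pr}_2^*\mathcal{O}_{\mathbb{P}(V)}(1)$. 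Since $H^0(\mathcal{O}_{\mathbb{P}(E)}(1))=V$ and these sections are pulled back from $\mathbb{P}(V)$, the morphism $\phi$ attached to $|\mathcal{O}_{\mathbb{P}(E)}(1)|$ is exactly $\mathrm{pr}_2\circ j$, while $\pi=\mathrm{pr}_1\circ j$. Setting this up, together with the standard compatibility of the two linear systems, is the only preparatory step.

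With the embedding in place the isomorphism is immediate: $\phi^{-1}([v])=j^{-1}\bigl(\mathbb{P}^n\times\{[v]\}\bigr)$ is a closed subscheme of the single slice $\mathbb{P}^n\times\{[v]\}$, on which $\mathrm{pr}_1=\pi$ restricts to an isomorphism onto $\mathbb{P}^n$. Hence $\pi$ carries $\phi^{-1}([v])$ isomorphically onto its image $\pi\phi^{-1}([v])$, and there is no positive-dimensional fibre to account for.

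It then remains to identify this image set-theoretically. Over a point $x\in\mathbb{P}^n$ the fibre $\mathbb{P}(E)_x\subset\mathbb{P}(V)$ is the linear subspace determined by the surjection $V\to E_x$, namely the functionals on $V$ that factor through $E_x=\oplus_i(E_i)_x$. Thus $x\in\pi\phi^{-1}([v])$ iff $[v]\in\mathbb{P}(E)_x$ iff the functional $v=(v_i)$ factors through $\oplus_i(E_i)_x$; because the evaluation splits as a direct sum, this is equivalent to each $v_i\in V_i^*$ factoring through $(E_i)_x$. For $v_i\neq 0$ the same fibrewise computation applied to the single bundle $E_i$ shows the factoring condition is precisely $x\in\pi_i\phi_i^{-1}([v_i])=S_i$; for $v_i=0$ the condition is vacuous, giving $S_i=\mathbb{P}^n$. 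Intersecting over $i$ yields $\pi\phi^{-1}([v])=\cap_i S_i$.

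The only real subtlety is bookkeeping with the projectivization convention and the direction of the induced maps on functionals; once the embedding into $\mathbb{P}^n\times\mathbb{P}(V)$ and the fibrewise linear-subspace description are pinned down, both the isomorphism and the equality follow formally. The $v_i=0$ case deserves separate mention since $[v_i]$ is then undefined, but it is exactly the case in which the factoring constraint disappears and $S_i$ is all of $\mathbb{P}^n$.
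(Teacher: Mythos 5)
Your argument is correct. The paper gives no proof of its own for this lemma --- it simply defers to {\cite[Lemma 2.4]{bansal2024extremal}} --- so there is no internal argument to compare against; your route (embedding $\mathbb{P}_{\mathbb{P}^n}(E)\hookrightarrow\mathbb{P}^n\times\mathbb{P}(V)$ via the evaluation surjection, noting $\phi=\mathrm{pr}_2\circ j$ and $\pi=\mathrm{pr}_1\circ j$, and then doing fibrewise linear algebra with the splitting $V^*=\oplus_iV_i^*$) is the standard one and is sound. The isomorphism $\phi^{-1}([v])\cong\pi\phi^{-1}([v])$ you obtain scheme-theoretically, since $j$ is a closed immersion and $\mathrm{pr}_1$ restricts to an isomorphism on the slice $\mathbb{P}^n\times\{[v]\}$. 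The one place you could sharpen the write-up is the identification of the image, which you only prove set-theoretically: if one wants $\pi\phi^{-1}([v])=\cap_iS_i$ as schemes, observe that $\mathbb{P}(E)\subset\mathbb{P}^n\times\mathbb{P}(V)$ is the zero scheme of the composite $\mathrm{pr}_1^*K\to V\otimes\mathcal{O}\to\mathrm{pr}_2^*\mathcal{O}_{\mathbb{P}(V)}(1)$ with $K=\ker(V\otimes\mathcal{O}\to E)=\oplus_i\ker(V_i\otimes\mathcal{O}\to E_i)$, so the fibre over $[v]$ is the zero scheme of $v|_K=(v_i|_{K_i})_i\in H^0(K^*)$, which is the scheme-theoretic intersection of the zero schemes of the $v_i|_{K_i}$, i.e.\ of the $S_i$ (the summands with $v_i=0$ contributing all of $\mathbb{P}^n$, as you note). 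For the uses made of the lemma in the paper (dimension counts and reduced fibres) your set-theoretic version already suffices.
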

 \begin{proof}
     Same argument as in the proof of {\cite[Lemma 2.4]{bansal2024extremal}} shows the result.
 \end{proof}

\begin{lemma}\label{lemma:kollar}
     Let $Y$ be a smooth projective variety, $W$ a smooth subvariety of $Y$, $X\xrightarrow{\phi}Y$ the blow of $Y$ along $W$, $E\hookrightarrow X$ the exceptional divisor. Let $W'$ be a closed subscheme of $Y$ such that $\phi^{-1}(W')=E$ scheme-theoretically. Then $W=W'$.
     \end{lemma}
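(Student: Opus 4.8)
The plan is to translate the statement into an equality of ideal sheaves and prove that equality by combining a pushforward computation, which gives one inclusion, with a fibrewise Nakayama argument, which gives the reverse. Write $I=I_W$ and $J=I_{W'}$ for the ideal sheaves in $\mathcal{O}_Y$. By definition of the blow-up along the smooth centre $W$, the inverse image ideal sheaf $I\cdot\mathcal{O}_X$ is the invertible sheaf $\mathcal{O}_X(-E)$; the hypothesis that $\phi^{-1}(W')=E$ scheme-theoretically says precisely that the inverse image ideal sheaf $J\cdot\mathcal{O}_X$ is also $\mathcal{O}_X(-E)$. So the goal reduces to showing that an ideal sheaf $J$ with $J\cdot\mathcal{O}_X=\mathcal{O}_X(-E)=I\cdot\mathcal{O}_X$ must equal $I$.

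First I would establish $J\subseteq I$. The key input is $\phi_*\mathcal{O}_X(-E)=I$, obtained by pushing forward $0\to\mathcal{O}_X(-E)\to\mathcal{O}_X\to\mathcal{O}_E\to 0$ and using $\phi_*\mathcal{O}_X=\mathcal{O}_Y$ (birationality together with normality of $Y$) and $\phi_*\mathcal{O}_E=\mathcal{O}_W$ (since $E\to W$ is a projective bundle): the kernel of the restriction $\mathcal{O}_Y\to\mathcal{O}_W$ is exactly $I$. The adjunction unit followed by the surjection onto the image then yields a map $J\to\phi_*(J\cdot\mathcal{O}_X)=\phi_*\mathcal{O}_X(-E)=I$ compatible with the two inclusions into $\mathcal{O}_Y=\phi_*\mathcal{O}_X$, so $J\subseteq I$ as subsheaves of $\mathcal{O}_Y$, i.e. $W\subseteq W'$. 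Note also that $\phi$ is an isomorphism over $Y\setminus W$, so $J=I=\mathcal{O}_Y$ there.

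For the reverse inclusion I would argue locally near $W$ and invoke Nakayama. The surjectivity of $\phi^*J\to\mathcal{O}_X(-E)$ (the content of $J\cdot\mathcal{O}_X=\mathcal{O}_X(-E)$), restricted to a fibre $F_p\cong\mathbb{P}^{c-1}$ of $E\to W$ over $p\in W$, with $c=\mathrm{codim}\,W$, says that the images of local generators of $J$ are sections of $\mathcal{O}_X(-E)|_{F_p}\cong\mathcal{O}_{\mathbb{P}^{c-1}}(1)$ with no common zero. Since on $\mathbb{P}^{c-1}$ a family of linear forms generates $\mathcal{O}(1)$ if and only if it spans the full space $H^0(\mathcal{O}(1))$ of linear forms, the images of the generators of $J$ span the conormal fibre of $W$ at $p$. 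As this holds at every point of $W$, the composite $J\hookrightarrow I\to I/I^2$ is surjective near $W$, hence $J+I^2=I$ in a neighbourhood of $W$. Setting $M=I/J$, a coherent sheaf supported on $W$ by the previous paragraph, the relation $J+I^2=I$ gives $I\cdot M=M$; as $I$ lies in the maximal ideal at each point of $W$, Nakayama forces $M=0$, i.e. $J=I$ and $W=W'$.

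The main obstacle is exactly the reverse inclusion $I\subseteq J$: pushforward formalism alone tends to over-count, recovering an integral closure rather than the ideal itself, so it cannot by itself yield equality. The indispensable input is that the inverse image ideal is \emph{exactly} $\mathcal{O}_X(-E)$ and not a higher power or a non-reduced thickening; converting this into fibrewise generation of $\mathcal{O}(1)$ on the exceptional $\mathbb{P}^{c-1}$'s, and hence into the Nakayama relation $J+I^2=I$, is what pins down $J=I$.
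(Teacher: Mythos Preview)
Your proof is correct and follows essentially the same approach as the paper: both hinge on restricting the surjection onto $\mathcal{O}_X(-E)$ to the fibres $\mathbb{P}^{c-1}$ of $E\to W$, using that sections generating $\mathcal{O}(1)$ must span all of $H^0(\mathcal{O}(1))$, and then finishing with Nakayama. The only differences are organizational: the paper gets $J\subseteq I$ trivially from $W'_{\mathrm{red}}=\phi(E)=W$ and first reduces to the case $J\supseteq I^2$ before passing to quotient sheaves on $W$, whereas you obtain $J\subseteq I$ via the pushforward identity $\phi_*\mathcal{O}_X(-E)=I$ and work directly with $\phi^*J\twoheadrightarrow\mathcal{O}_X(-E)$ without that preliminary reduction.
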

     \begin{proof}
         Clearly $W'_{red}=W$. Let $\mathcal{I}', \mathcal{I}, \mathcal{I}_E$ be the ideal sheaves of $W', W, E$ respectively. Let $\mathcal{I}''=\mathcal{I}^2+\mathcal{I}'$, $W''=$ the closed subscheme of $Y$ corresponding to $\mathcal{I}''$. Since $\mathcal{I}'\subset \mathcal{I}''\subset \mathcal{I},$ we have $W'\supset W''\supset W$, so $\phi^{-1}W''=E$. If $W''=W$, then $\mathcal{I}''=\mathcal{I}$ so by Nakayama $\mathcal{I}=\mathcal{I}'$, that is, $W=W'$. So we can assume without loss of generality $\mathcal{I}'\supset \mathcal{I}^2$.

         So $\mathcal{F}=\mathcal{I}'/\mathcal{I}^2, \mathcal{E}=\mathcal{I}/\mathcal{I}^2, F=\mathcal{I}/\mathcal{I}'$ are coherent sheaves on $W$ with $\mathcal{E}$ locally free, and they fit into a short exact sequence $0\to \mathcal{F}\to \mathcal{E}\to F\to 0$. We want to show $F=0$. 

         Since $\phi^{-1}W'=E$, the map $\phi^*\mathcal{I}'\to \phi^*\mathcal{O}_X=\mathcal{O}_Y$ has image $\mathcal{I}_E$. Clearly, for each $k\geq 1$, the map $\phi^*\mathcal{I}^k\to \mathcal{O}_Y$ has image $\mathcal{I}_E^k$. So we have a commutative square with vertical arrows surjective:
         \begin{center}
\begin{tikzcd}
 \phi^*\mathcal{I}^2 \arrow[r, "\alpha"   ] \arrow[d]
&  \phi^*\mathcal{I}'\arrow[d] \\
\mathcal{I}_E^2\arrow[r, "\beta"]
& |[, rotate=0]|  \mathcal{I}_E.
\end{tikzcd}
\end{center}
 This induces a surjection coker $\alpha\to$ coker $\beta.$ Clearly coker $\beta=\mathcal{I}_E/\mathcal{I}_E^2=\mathcal{O}_{\mathbb{P}(\mathcal{E})}(1)$. Applying $\phi^*$ to the short exact sequence $0\to \mathcal{I}^2\to \mathcal{I}'\to \mathcal{F}\to 0$ we get coker $\alpha=\phi^*\mathcal{F}$. So we have surjection $\phi^*\mathcal{F}\xrightarrow{f}\mathcal{O}_{\mathbb{P}(\mathcal{E})}(1).$ Restricting to $\phi^{-1}(w)$ for $w\in W$ we get surjection $\mathcal{O}_{\mathbb{P}(\mathcal{E}(w))}\otimes_k \mathcal{F}(w)\to \mathcal{O}_{\mathbb{P}(\mathcal{E}(w))}(1).$ Since for any finite dimensional vector space $V$ over $k$, no proper subspace of $H^0(\mathbb{P}(V),\mathcal{O}_{\mathbb{P}(V)}(1))$ generate $\mathcal{O}_{\mathbb{P}(V)}(1)$, we see that the induced map $\mathcal{F}(w)=H^0(\mathbb{P}(\mathcal{E}(w)), \mathcal{O}_{\mathbb{P}(\mathcal{E}(w))}\otimes_k \mathcal{F}(w))\to H^0(\mathbb{P}(\mathcal{E}(w)),\mathcal{O}_{\mathbb{P}(\mathcal{E}(w))}(1))=\mathcal{E}(w)$ is surjective. But the short exact sequence $0\to \mathcal{F}\to \mathcal{E}\to F\to 0$ gives exact sequence $\mathcal{F}(w)\to \mathcal{E}(w)\to F(w)\to 0$. So, $F(w)=0$ for all $w\in W$. By Nakayama, $F=0$. So, $\mathcal{I}'=\mathcal{I},$ that is $W=W'.$
     \end{proof}

 Now let us consider the following setup.
 
\textbf{Setup:} Let $n, r$ be positive integers, $Y$ a smooth projective variety of dimension $n+r$, $W$ a smooth subvariety of $Y$ of dimension $m\leq n+r-2$. Let $\phi:X \to Y$ be the blow up of $Y$ along $W$, $E\subset X$ the exceptional divisor of $\phi$. Suppose $X$ has also a structure of a $\mathbb{P}^r$-bundle over $\mathbb{P}^n$, let $\pi:X\to \mathbb{P}^n$ be a $\mathbb{P}^r$-bundle structure. There is a unique nef vector bundle $\mathcal{E}$ of rank $r+1$  over $\mathbb{P}^n$ such that $X\cong \mathbb{P}_{\mathbb{P}^n}(\mathcal{E})$ over $\mathbb{P}^n$ and $\mathcal{E}(-1)$ is not nef. (Recall that a vector bundle $G$ over $\mathbb{P}^n$ is called nef/ample/big if $\mathcal{O}_{\mathbb{P}(G)}(1)$ is nef/ample/big). Since $\mathbb{Z}^2\cong\text{Pic }X\cong \mathbb{Z}\oplus \text{Pic }Y$, we see that $\text{Pic }Y\cong \mathbb{Z}$. Since $Y$ is also rational, $Y$ is a Fano manifold of Picard number $1$. Let $\mathcal{O}_Y(1)$ be the ample generator of $\text{Pic }Y$, and $H_1=\pi ^*\mathcal{O}_{\mathbb{P}^n}(1)$, $H_2=\phi^*\mathcal{O}_Y(1)$, $U=\mathcal{O}_{\mathbb{P}(\mathcal{E})}(1)\in \text{Pic }(X)$. We can regard the Weil divisor $E$ also as an element of Pic$(X)$.  Let $\tau$ be the index of $Y$, that is, $-K_Y=\mathcal{O}_Y(\tau). $ Also, let $\alpha$ be the top self-intersection number of $\mathcal{O}_Y(1)$, so $H_2^{n+r}=\alpha.$ Let $\mathcal{O}_W(1)=\mathcal{O}_Y(1)|_{W}.$

Let $F_1$ be a line in a fiber $\pi^{-1}(z)$ (we have $\pi^{-1}(z)\cong \mathbb{P}^r)$, and $F_2$ be a line in a fiber $\phi|_E^{-1}(w)$ (we have $\phi|_E^{-1}(w)=\mathbb{P}^{n+r-m-1}$).  So, $N_1(X)_{\mathbb{Q}}$ has basis $\{F_1, F_2\}$, and $\{H_1, U\}$ and $\{H_2, E\}$ are both bases of Pic$(X)\cong \mathbb{Z}^2$. We have $H_1. F_1=H_2.F_2=0$, $U.F_1=1$, $E.F_2=-1$. Let $a=H_1.F_2$, $d=E.F_1$, $b=U.F_2$. By {\cite[Lemma 3]{li2021projective} and its proof, we have $H_2. F_1=a$, $a|1+bd$, and 
 $$H_1=dH_2-aE$$

        $$U=\frac{1+bd}{a}H_2-bE$$

        in Pic($X$).
Note that if $Y=\mathbb{P}^{n+r}$, then $H_1=dH_2-aE$ implies that the rational map $\pi \circ \phi^{-1}:\mathbb{P}^{n+r}\dashrightarrow \mathbb{P}^{n}$ is defined by an $(n+1)$-tuple of homogeneous polynomials of degree $d$, so this $d$ is the same $d$ as in the statement of Theorem \ref{theorem A}. Since $U$ is nef but $U-H_1$ is not nef, we have $0\leq b<a$.

 For $0\leq i\leq n$ Let $c_i, s_i\in \mathbb{Z}$ denote the $i$-th Chern and Segre classes of $\mathcal{E}$, respectively. Here we are using the natural identification $H^{2i}(\mathbb{P}^n, \mathbb{Z})=\mathbb{Z}$, via the natural generator of $H^{2i}(\mathbb{P}^n, \mathbb{Z})$ given by $i$-th power of the first Chern class of $\mathcal{O}_{\mathbb{P}^n}(1).$ By definition of Segre classes as in {\cite[Chapter 3]{fulton2013intersection} we have $U^{r+i}H_1^{n-i}=(-1)^i s_i$ for $0\leq i\leq n.$ Also by{\cite[Chapter 3]{fulton2013intersection} we have 
 \begin{equation}
     \Big(\sum_{i=0}^n c_it^i\Big)\Big(\sum_{i=0}^n s_it^i\Big)\equiv 0(\text{mod } t^{n+1}).
 \end{equation}
 \begin{lemma}\label{lemma:a=1}
 In the notations of the setup, we have:
 \begin{enumerate}
     \item[(i)] We have: $\mathcal{E}$ is not ample $\Leftrightarrow a=1, b=0.$
     \item[(ii)] We have:$$\tau=d(n+1-c_1)+(r+1)\frac{1+bd}{a},$$
     $$n+r-m-1=a(n+1-c_1)+b(r+1).$$
     \item[(iii)] If $\mathcal{E}$ is not ample, we have $d=\frac{\tau-r-1}{n+r-m-1}, c_1=m-r+2.$
     \item[(iv)] If $\mathcal{E}$ is not ample, then $W$ is the base locus of $\pi \circ \phi^{-1}:Y\dashrightarrow \mathbb{P}^{n}$.
     \item[(v)] $a^r|\alpha.$
     \item[(vi)] If $Y=\mathbb{P}^{n+r}$, then $a=1, b=0, \text{codim }W=n+r-m=\frac{n}{d}+1, c_1=\frac{d-1}{d}n+1.$
 \end{enumerate}
 \end{lemma}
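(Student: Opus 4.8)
The plan is to read off everything from the two bases $\{H_1,U\}$ and $\{H_2,E\}$ of $\mathrm{Pic}(X)$, the relations $H_1=dH_2-aE$ and $U=\frac{1+bd}{a}H_2-bE$ recorded in the setup, and two computations of $-K_X$. For (i), since $X$ has Picard rank $2$ its Mori cone $\overline{NE}(X)$ is the two-dimensional cone spanned by the classes $F_1,F_2$ of lines in the fibres of the two extremal contractions $\pi$ and $\phi|_E$. As $\mathcal E$ is nef, $U$ is nef, so by Kleiman's criterion $\mathcal E$ is ample iff $U$ is strictly positive on both rays; since $U\cdot F_1=1>0$ and $U\cdot F_2=b\ge 0$, this holds iff $b>0$. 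Hence $\mathcal E$ is not ample iff $b=0$, and then $a\mid 1+bd=1$ together with $a\ge 1$ (which holds because $H_1\neq 0$ while $H_1\cdot F_1=0$) forces $a=1$; this proves (i).

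For (ii) I would compute $-K_X$ twice and compare coefficients in the basis $\{H_2,E\}$. The projective bundle formula gives $-K_X=(r+1)U+(n+1-c_1)H_1$, while the blow-up formula gives $-K_X=\tau H_2-(n+r-m-1)E$. Substituting the expressions for $U$ and $H_1$ into the first and matching against the second produces exactly the two displayed identities; part (iii) is then the substitution $a=1$, $b=0$ coming from (i), which collapses the first identity to $\tau=(r+1)+d(n+1-c_1)$ and the second to $n+r-m-1=n+1-c_1$, giving $c_1=m-r+2$ and $d=\frac{\tau-r-1}{n+r-m-1}$.

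Part (iv) is the step I expect to be the main obstacle, since it is the only scheme-theoretic rather than purely numerical assertion, and it is precisely what Lemma~\ref{lemma:kollar} is built to handle. With $a=1$, $b=0$ we have $H_1=dH_2-E$, so by the projection formula $\phi_*\mathcal O_X(H_1)=\mathcal O_Y(d)\otimes\mathcal I_W$; thus the complete linear system $|H_1|$ that defines $\pi$ is identified with the system $|\mathcal I_W(d)|$ of degree-$d$ divisors through $W$, and $\pi\circ\phi^{-1}$ is the rational map attached to this system, with base scheme $B$ the object to be identified with $W$. The key point is that the inverse image ideal $\mathcal I_B\cdot\mathcal O_X$ equals $\mathcal I_E$: every section of $\mathcal I_W(d)$ pulls back to a section of $\mathcal O_X(dH_2)$ that factors as the canonical section of $\mathcal O_X(E)$ times a section of $\mathcal O_X(H_1)$, and since $|H_1|$ is base-point free the residual sections have no common zero, leaving $\mathcal I_E$. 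Hence $\phi^{-1}(B)=E$ scheme-theoretically, and Lemma~\ref{lemma:kollar} yields $B=W$.

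Finally, for (v) I would invert the change of basis, whose matrix has determinant $1$, to get $H_2=aU-bH_1$, then expand $\alpha=H_2^{\,n+r}=(aU-bH_1)^{n+r}$ by the binomial theorem. Because $H_1^{n+1}=0$, every term with fewer than $r$ factors of $U$ vanishes, while each surviving term $U^{r+i}H_1^{n-i}=(-1)^i s_i$ is an integer and carries a factor $a^k$ with $k\ge r$; hence $a^r\mid\alpha$. For (vi), when $Y=\mathbb P^{n+r}$ we have $\alpha=1$ and $\tau=n+r+1$, so (v) forces $a^r\mid 1$, giving $a=1$, whereupon $0\le b<a$ gives $b=0$; substituting $\tau=n+r+1$ into the formulas of (iii) yields $d(n+r-m-1)=n$, i.e. $\mathrm{codim}\,W=n/d+1$, and $c_1=m-r+2=\frac{d-1}{d}n+1$.
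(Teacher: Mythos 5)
Your proposal is correct and follows essentially the same route as the paper: (i) by evaluating the nef class $U$ on the two extremal rays, (ii)--(iii) by comparing the two expressions for $-K_X$ in the basis $\{H_2,E\}$, (iv) by showing $\phi^{-1}(W')=E$ scheme-theoretically from $dH_2=H_1+E$ and invoking Lemma \ref{lemma:kollar}, and (v)--(vi) by expanding $H_2^{n+r}=(aU-bH_1)^{n+r}$. You merely spell out some steps (notably the scheme-theoretic identification in (iv) and the substitutions in (vi)) that the paper leaves as immediate.
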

 \begin{proof}
    \textit{(i):} Note that: $\mathcal{E}$ is not ample $\Leftrightarrow \mathcal{O}_{\mathbb{P}(\mathcal{E})}(1)(=U)$ is not ample $\Leftrightarrow U.F_2\leq 0$ (as $U.F_1=1>0$) $\Leftrightarrow b\leq 0 \Leftrightarrow b=0$ (as we assumed $b\geq 0$) $ \Leftrightarrow a=1, b=0$ (as $a|1+bd$).  
    
    \textit{(ii):} By canonical bundle formulae, $\tau H_2-(n+r-m-1)E=-K_X=(n+1-c_1)H_1+(r+1)U=(n+1-c_1)(dH_2-aE)+(r+1)(\frac{1+bd}{a}H_2-bE).$ Comparing coefficients of $H_2$ and $E$ in both sides, we get $ii)$.

\textit{(iv):} We have $a=1, b=0$ by $(i)$. Let $W'$ be the base locus of $\pi \circ \phi^{-1}$. Since $dH_2=H_1+E$, we see that $\phi^{-1}(W')=E$ scheme-theoretically. By Lemma \ref{lemma:kollar}, $W=W'$.

\textit{(v):} We have $1=H_2^{n+r}=(-bH_1+aU)^{n+r}=\sum_{i=0}^{n} {{n+r}\choose i}(-b)^ia^{n+r-i}H_1^iU^{n+r-i}.$ Clearly each term in this sum is divisible by $a^r$.

$(iii)$ is immediate from $(i)$ and $(ii)$. $(vi)$ is immediate from $(v),(i)$ and $(iii).$ 
    
 \end{proof}
 \begin{lemma}\label{lemma:adivide}
  In the notations of the setup, assume $n\geq 2.$
 \begin{enumerate}
     \item[(i)] If $1\leq t \leq min\{r,n+r-m-1\},$ $j\geq 0$ are integers, such that $a^j|{{n+r-t}\choose{n-1}}$, then $a^{min\{2,j+1\}}|{{n+r-t}\choose{n}}.$
     \item[(ii)] $a|{{n+r-1}\choose{n}}.$
     \item[(iii)] If $n+r-m\geq 3$, and $r\geq 2$, then $a^2|{{n+r-2}\choose{n}}.$
     \item[(iii)] $a^3|{{n+r-1}\choose{n}}b^2d+{{n+r-1}\choose{n-1}}ab(\frac{1+bd}{a}-c_1d)+{{n+r-1}\choose{n-2}}a^2(dU^{r+2}H_1^{n-2}-c_1.\frac{1+bd}{a}).$
 \end{enumerate}
     
 \end{lemma}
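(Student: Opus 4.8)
The plan is to convert every assertion into a congruence between integer intersection numbers on $X$, read off order by order in the parameter $a$. I work with two numerical dictionaries, one for each natural basis of $\mathrm{Pic}(X)$. In the basis $\{H_1,U\}$, set $\gamma_i=H_1^iU^{n+r-i}$; since $H_1^{n+1}=0$ and $U^{r+i}H_1^{n-i}=(-1)^is_i$, one has $\gamma_i=(-1)^{n-i}s_{n-i}$ for $0\le i\le n$ and $\gamma_i=0$ for $i>n$, with $\gamma_n=1$, $\gamma_{n-1}=c_1$ and $\gamma_{n-2}=s_2=U^{r+2}H_1^{n-2}$. In the basis $\{H_2,E\}$, set $\beta_p=H_2^pE^{n+r-p}$; because $H_2|_E=(\phi|_E)^*\mathcal O_W(1)$ and $\mathcal O_W(1)^{m+1}=0$, the projection formula gives $\beta_p=0$ for $m<p\le n+r-1$ (this is exactly where $m\le n+r-2$ enters) while $\beta_{n+r}=\alpha$. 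Finally I record three elementary facts: $\gcd(a,b)=1$ (from $a\mid 1+bd$), the integer $e:=\tfrac{1+bd}{a}$ satisfies $H_1=dH_2-aE$, $U=eH_2-bE$, $E=dU-eH_1$, and, writing $A:=\alpha/a^r\in\mathbb Z$ (Lemma \ref{lemma:a=1}(v)), expanding $\alpha=(-bH_1+aU)^{n+r}$ in the $\gamma_i$ yields $A\equiv\binom{n+r}{n}(-b)^n\pmod a$.

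The engine is the following collapse, valid for $0\le t\le n+r-m-1$: substituting $U=eH_2-bE$ and using $\beta_{n+r-l}=0$ for $1\le l\le n+r-m-1$ kills every term containing $E$, so that $H_2^{n+r-t}U^t=e^t\alpha$ and, likewise, $H_2^{n+r-t}U^{t-1}H_1=e^{t-1}d\alpha$. Expanding the left-hand sides instead in the basis $\{H_1,U\}$ and contracting with the $\gamma_i$ gives, for $t\le\min\{r,n+r-m-1\}$,
\[ e^t\alpha=\sum_{k=0}^n\binom{n+r-t}{k}(-b)^ka^{n+r-t-k}\gamma_k,\qquad e^{t-1}d\alpha=\sum_{k=0}^{n-1}\binom{n+r-t}{k}(-b)^ka^{n+r-t-k}\gamma_{k+1}. \]
The lowest power of $a$ is $a^{r-t}$ in the first identity (from $k=n$, coefficient $\binom{n+r-t}{n}(-b)^n$) and $a^{r-t+1}$ in the second (from $k=n-1$, coefficient $\binom{n+r-t}{n-1}(-b)^{n-1}$); this is precisely where $t\le r$ is needed, so that these exponents are nonnegative and the two binomials $\binom{n+r-t}{n}$, $\binom{n+r-t}{n-1}$ sit as leading terms.

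For (i) and (ii) I divide the first identity by $a^{r-t}$ (legitimate since $a^r\mid\alpha$), obtaining $e^ta^tA=\sum_k\binom{n+r-t}{k}(-b)^ka^{n-k}\gamma_k$. Reducing mod $a$ isolates $k=n$ and gives $a\mid\binom{n+r-t}{n}(-b)^n$, hence $a\mid\binom{n+r-t}{n}$ as $\gcd(a,b)=1$; this is the $j=0$ case of (i), and at $t=1$ it is (ii). For $j\ge1$ the hypothesis is $a\mid\binom{n+r-t}{n-1}$, and I reduce mod $a^2$: when $t\ge2$ the left side $e^ta^tA$ vanishes mod $a^2$, the $k=n-1$ term is killed by the hypothesis, and the $k=n$ term yields $a^2\mid\binom{n+r-t}{n}$. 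When $t=1$ the left side $eaA$ survives mod $a^2$, but $A\equiv\binom{n+r}{n}(-b)^n\pmod a$, together with Pascal's rule $\binom{n+r}{n}=\binom{n+r-1}{n}+\binom{n+r-1}{n-1}$ and the two divisibilities $a\mid\binom{n+r-1}{n}$, $a\mid\binom{n+r-1}{n-1}$, forces $a\mid A$, so again $a^2\mid\binom{n+r-1}{n}$. This proves (i). For the first statement (iii), the second identity at $t=2$ (which needs $n+r-m\ge3$), divided by $a^{r-1}$ and reduced mod $a$, gives $a\mid\binom{n+r-2}{n-1}$; feeding this, together with $r\ge2$, into (i) at $t=2,\ j=1$ yields $a^2\mid\binom{n+r-2}{n}$.

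For the last statement I expand the vanishing number $\beta_{n+r-1}=H_2^{n+r-1}E=0$ directly in the basis $\{H_1,U\}$ via $E=dU-eH_1$, then collect the result by powers of $a$ after factoring out the common $a^{r-1}$. The terms of relative $a$-orders $0,1,2$ assemble, up to the unit $(-1)^nb^{n-2}$, into exactly
\[ P:=\binom{n+r-1}{n}b^2d+\binom{n+r-1}{n-1}ab(e-c_1d)+\binom{n+r-1}{n-2}a^2\bigl(d\,U^{r+2}H_1^{n-2}-c_1e\bigr), \]
while every term of relative order $\ge3$ is divisible by $a^3$. Since the whole expression is $0$, we get $a^3\mid(-1)^nb^{n-2}P$, and $\gcd(a,b)=1$ gives $a^3\mid P$ (the degenerate case $b=0$ forces $a=1$ by $a\mid 1+bd$, so the claim is trivial there). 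The main obstacle is entirely bookkeeping: pinning down the exact correspondence between each binomial coefficient, the Segre/Chern datum $\gamma_i$, and the power of $a$ at every order, and in particular the borderline case $t=1$ of (i), where the leading term of the left-hand side does not vanish modulo $a^2$ and must be controlled through the auxiliary congruence for $A$.
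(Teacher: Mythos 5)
Your proof is correct and follows essentially the same route as the paper: both expand intersection numbers in the two bases of $\mathrm{Pic}(X)$, exploit the vanishing of $E^{n+r-p}H_2^p$ for $m<p<n+r$ to read off divisibility by powers of $a$, and finish with $\gcd(a,b)=\gcd(a,d)=1$; your part (iv) in particular is the paper's computation verbatim. The only cosmetic difference is that the paper works from the identity $E^tH_2^{n+r-t}=0$, whose left-hand side is exactly zero, whereas your identities $H_2^{n+r-t}U^t=e^t\alpha$ carry the extra term $e^ta^t(\alpha/a^r)$ and therefore require the additional Pascal-rule/congruence step at $t=1$, which you correctly supply.
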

 \begin{proof}

     \textit{(i):} Since $n+r-t>m=\text{dim }W$,we have $$0=E^tH_2^{n+r-t}=(-\frac{1+bd}{a}H_1+dU)^t(-bH_1+aU)^{n+r-t}.$$ So,
     \begin{equation}
\begin{split}
0 = & \binom{n+r-t}{n} (-b)^{n}a^{r-t}d^{t} \\
& +\Big(\sum_{i=0}^{t}\binom{t}{i}\Big(-\frac{1+bd}{a}\Big)^{i}d^{t-i}H_{1}^{i}U^{t-i}\Big) \cdot \Big(\sum_{i=0}^{n-1}\binom{n+r-t}{i}(-b)^{i}a^{n+r-t-i} H_{1}^{i}U^{n+r-t-i}\Big).
\end{split}
\end{equation}

         Note that $a^{r-t+min\{2,j+1\}}|\binom{n+r-t}{i}(-b)^{i}a^{n+r-t-i} H_{1}^{i}U^{n+r-t-i}$ for all $0\leq i\leq n-1$. So,  right hand side of equation $(2)$ is $\equiv {{n+r-t}\choose{n}}(-b)^na^{r-t}d^t(\text{mod }a^{r-t+min\{2,j+1\}}).$ So, $$a^{r-t+min\{2,j+1\}})|{{n+r-t}\choose{n}}(-b)^na^{r-t}d^t,$$ hence $a^{min\{2,j+1\}})|{{n+r-t}\choose{n}}(-b)^nd^t.$ As $a|1+bd$, so gcd$(a,b)=$gcd$(a,d)=1$. So $(i)$ follows.

         $(ii)$ follows by putting $t=1, j=0$ in $(i)$.

         \textit{(iii):} Put $j=0$ in $(i)$ to get $a|{{n+r-t}\choose{n}}$ for $t=1,2$. So, $a|{{n+r-1}\choose{n}}-{{n+r-2}\choose{n}}={{n+r-2}\choose{n-1}}.$ Now put $t=2, j=1$ in $(i)$ to get $(iii)$.

         \textit{(iv):} Put $t=1$ in equation $(2)$ and get $$0={{n+r-1}\choose{n}}(-b)^na^{r-1}d+\sum_{i=0}^{n-1} {{n+r-1}\choose{i}}(-b)^i a^{n+r-1-i}(dU^{n+r-i}H_1^{i}-\frac{1+bd}{a}U^{n+r-i-1}H_1^{i+1})$$ 
         $$\hspace{-100 pt}\equiv{{n+r-1}\choose{n}}(-b)^na^{r-1}d+{{n+r-1}\choose{n-1}}(-b)^{n-1}a^{r}(c_1d-\frac{1+bd}{a})$$
         $$+{{n+r-1}\choose{n-2}}(-b)^{n-2}a^{r+1}(dU^{r+2}H_1^{n-2}-\frac{1+bd}{a}c_1) (\text{mod }a^{r+2}).$$ Here we used $U^rH_1^n=1, U^{r+1}H_1^{n-1}=-s_1=c_1.$ Since gcd$(a,b)=1$, we can take the common factor $(-b)^{n-2}$ out, and then divide by $a^{r-1}$ to get $(iv)$.
         
 \end{proof}
 
 \begin{lemma} \label{lemma:segre}
In the notations of the setup, suppose $\mathcal{E}$ is not ample. Then for $c_1-1\leq i\leq n,$ we have $(-1)^{i}s_{i}=d^{n-i}\alpha$ and $(-1)^{c_1-2}s_{c_1-2}=d^{n-c_1+2}\alpha-\mathcal{O}_W(1)^{.m}<d^{n-c_1+2}\alpha.$
 \end{lemma}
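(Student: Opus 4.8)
The plan is to exploit the non-ampleness of $\mathcal{E}$ to put the two generators of $\mathrm{Pic}(X)$ into a convenient form, expand each Segre number as a polynomial in $H_2$ and $E$, and then read off the answer from the vanishing of the mixed intersection numbers $H_2^{\,n+r-k}E^{k}$ for small $k$. First I would invoke Lemma \ref{lemma:a=1}(i), which gives $a=1$ and $b=0$, so that $U=H_2$ and $H_1=dH_2-E$. Substituting into the defining relation $U^{r+i}H_1^{n-i}=(-1)^i s_i$ and expanding binomially,
\[
(-1)^i s_i \;=\; H_2^{\,r+i}(dH_2-E)^{\,n-i}\;=\;\sum_{k=0}^{n-i}\binom{n-i}{k}(-1)^{k}d^{\,n-i-k}\,H_2^{\,n+r-k}E^{k}.
\]
Everything then comes down to understanding which of the numbers $H_2^{\,n+r-k}E^{k}$ are nonzero. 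Note that since I compute the product $U^{r+i}H_1^{n-i}$ directly, I land on exactly the quantity $(-1)^i s_i$ appearing in the statement, so no separate normalisation of the $s_i$ is needed.

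The heart of the argument is the computation of these numbers from the blow-up geometry. Writing $c=n+r-m$ for the codimension of $W$, the exceptional divisor is $E\cong\mathbb{P}(N_{W/Y})$ with projection $p=\phi|_E\colon E\to W$, whose fibres are $\mathbb{P}^{c-1}$; set $\zeta=c_1(\mathcal{O}_{\mathbb{P}(N_{W/Y})}(1))$ and $h=c_1(\mathcal{O}_W(1))$. Using $\mathcal{O}_E(E)=\mathcal{O}_{\mathbb{P}(N_{W/Y})}(-1)$, the self-intersection formula $E^{k}=(-1)^{k-1}j_{*}(\zeta^{\,k-1})$ (with $j\colon E\hookrightarrow X$), the factorisation $\phi\circ j=i\circ p$, and the projection formula, I would show
\[
H_2^{\,n+r-k}E^{k}\;=\;(-1)^{k-1}\!\int_{W}h^{\,n+r-k}\,p_{*}(\zeta^{\,k-1}).
\]
Since $N_{W/Y}$ has rank $c$, one has $p_{*}(\zeta^{\,k-1})=0$ for $k-1<c-1$ and $p_{*}(\zeta^{\,c-1})=1$. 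Hence $H_2^{\,n+r-k}E^{k}=0$ for $1\le k\le c-1$, while $H_2^{\,m}E^{\,c}=(-1)^{c-1}\!\int_W h^{\,m}=(-1)^{c-1}\,\mathcal{O}_W(1)^{\cdot m}$, and of course $H_2^{\,n+r}=\alpha$.

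Finally I would match the index $i$ to the codimension using Lemma \ref{lemma:a=1}(iii), which gives $c_1=m-r+2$, so that $n-i\ge c$ exactly when $i\le c_1-2$. For $i\ge c_1-1$ the upper limit $n-i$ is at most $c-1$, so every term with $k\ge 1$ vanishes and only $k=0$ survives, giving $(-1)^i s_i=d^{\,n-i}\alpha$. For $i=c_1-2$ one has $n-i=c$, so precisely the terms $k=0$ and $k=c$ remain; the latter contributes $(-1)^{c}H_2^{\,m}E^{\,c}=-\mathcal{O}_W(1)^{\cdot m}$, yielding $(-1)^{c_1-2}s_{c_1-2}=d^{\,n-c_1+2}\alpha-\mathcal{O}_W(1)^{\cdot m}$; the strict inequality then follows because $\mathcal{O}_W(1)$ is ample on the $m$-dimensional variety $W$, so $\mathcal{O}_W(1)^{\cdot m}>0$. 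The main obstacle is the careful sign bookkeeping in the second step — getting $\mathcal{O}_E(E)=\mathcal{O}_{\mathbb{P}(N_{W/Y})}(-1)$, the normalisation $p_{*}(\zeta^{\,c-1})=1$, and their interaction with the convention $U^{r+i}H_1^{n-i}=(-1)^i s_i$ all consistent; once the mixed intersection numbers are pinned down, the remainder is routine binomial expansion together with the identity $c_1=m-r+2$.
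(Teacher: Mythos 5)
Your proposal is correct and follows essentially the same route as the paper: reduce to $a=1$, $b=0$ so that $U=H_2$ and $H_1=dH_2-E$, expand $U^{r+i}H_1^{n-i}$ binomially, and kill all mixed terms $H_2^{n+r-k}E^k$ with $1\le k\le \operatorname{codim}W-1$ using the $\mathbb{P}^{n+r-m-1}$-bundle structure of $E$ over $W$, with the single surviving term $H_2^{m}E^{n+r-m}=(-1)^{n+r-m-1}\mathcal{O}_W(1)^{\cdot m}$ producing the correction for $i=c_1-2$. The only cosmetic difference is that you derive these intersection numbers from the projection formula rather than citing them, and your sign bookkeeping matches the paper's.
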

 \begin{proof}
 We have $a=1, b=0$ by Lemma \ref{lemma:a=1}. Since $E\to W$ is a $\mathbb{P}^{n+r-m-1}$-bundle, we have $E^{n+r-l}H_2^l=0$ for $m<l<n+r$ and $E^{n+r-m}H_2^m=(-1)^{n+r-m-1}\mathcal{O}_W(1)^{.m}$ (see also {\cite[Lemma 2.4]{sarkar2024varieties}}. So, $(-1)^{i} s_{i}=U^{r+i}H_1^{n-i}=(dH_2-E)^{n-i}H_2^{r+i}=d^{n-i}\alpha$, as $H_2^{n+r}=\alpha$ and for $i\geq c_1-1$ we have $r+i>c_1+r-2=m=\text{dim }W$, so $E^{n-i-j}H_2^{r+i+j}=0$ for $0\leq j <n-i$. Also, $(-1)^{c_1-2}s_{c_1-2}=U^{r+c_1-2}H_1^{n-c_1+2}=(dH_2-E)^{n-c_1+2}H_2^{r+c_1-2}=(dH_2-E)^{n+r-m}H_2^{m}=d^{n+r-m}\alpha+(-1)^{n+r-m}E^{n+r-m}H_2^m=d^{n-c_1+2}\alpha-\mathcal{O}_W(1)^{.m}.$ The last inequality in Lemma follows from the fact that $\mathcal{O}_W(1)$ is ample.
 \end{proof}
 
 For a compact complex manifold $Z$ and an integer $i$, denote dim$_{\mathbb{C}}$$H^{i}(Z, \mathbb{C})$ by $h^i(Z)$.
 Let $a_i=h^{2i}(W)$. Let $P(t)=\sum_i a_i t^i\in \mathbb{Z}[t].$
 \begin{lemma}\label{lemma:P}
     In the notations of the setup, suppose $Y=\mathbb{P}^{n+r}$. We have: \begin{enumerate}
         \item[(i)] $a_i\neq 0 \Longleftrightarrow 0\leq i\leq m$
         \item[(ii)] $P(t)=(\sum_{i=0}^{d-1} t^{in/d})(\sum_{i=0}^{r-1} t^i).$
         \item[(iii)] For each $k$, $H^{2k}(W,\mathbb{Z})$ is free abelian.
         \item[(iv)] $n/d\leq r.$
     \end{enumerate}
 \end{lemma}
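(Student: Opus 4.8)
The plan is to compute the integral cohomology of $X$ in two ways --- through its $\mathbb{P}^r$-bundle structure over $\mathbb{P}^n$ and through the blow-up description $\phi\colon X\to Y=\mathbb{P}^{n+r}$ with smooth center $W$ --- and to match the two. First I would settle $(i)$ by a self-contained geometric argument. Since $W$ is an irreducible smooth projective variety of dimension $m$, we have $a_i=h^{2i}(W)=0$ for $i>m$ (real dimension $2m$) and for $i<0$. For $0\le i\le m$, fixing an ample class $L$ on $W$, the intersection number $L^{i}\cdot L^{m-i}=L^{m}=\deg_L W>0$ forces $L^{i}\ne 0$ in $H^{2i}(W)$, so $a_i\ge 1$. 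This already gives $a_i\ne 0\iff 0\le i\le m$, and this positivity is exactly the fact that will later force the numerical constraint $(iv)$.

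The key input for the remaining parts is the blow-up formula for integral cohomology: writing $c=\operatorname{codim}_Y W$, there is a split isomorphism
\[
H^{k}(X,\mathbb{Z})\cong H^{k}(Y,\mathbb{Z})\oplus\bigoplus_{i=1}^{c-1}H^{k-2i}(W,\mathbb{Z}).
\]
By Lemma \ref{lemma:a=1}(vi), $c=n/d+1$, so $c-1=n/d\ge 1$. On the bundle side, $X=\mathbb{P}(\mathcal{E})$ is a $\mathbb{P}^{r}$-bundle over $\mathbb{P}^{n}$, so by Leray--Hirsch $H^{*}(X,\mathbb{Z})$ is free abelian, concentrated in even degrees, with Poincaré polynomial $P_X(t)=\bigl(\sum_{i=0}^{n}t^{i}\bigr)\bigl(\sum_{j=0}^{r}t^{j}\bigr)$ (here $t$ tracks the degree-$2$ grading), while $Y=\mathbb{P}^{n+r}$ gives $P_Y(t)=\sum_{i=0}^{n+r}t^{i}$, also free and even. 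Feeding these into the formula with $k$ odd shows that every odd cohomology group of $W$ vanishes (using $c-1\ge 1$); and since for each $j$ the group $H^{j}(W,\mathbb{Z})$ appears as a direct summand (the $i=1$ term) of the free group $H^{j+2}(X,\mathbb{Z})$, it is itself free abelian. This proves $(iii)$ and guarantees that $P(t)$ records all Betti numbers of $W$.

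With torsion and odd cohomology eliminated, the blow-up formula becomes the polynomial identity $P_X(t)=P_Y(t)+(t+t^{2}+\cdots+t^{c-1})P(t)$. Substituting $c-1=n/d$ and solving for $P(t)$ is a routine manipulation of geometric series: setting $q=n/d$ so that $dq=n$, one verifies directly that $\bigl(\sum_{i=0}^{d-1}t^{iq}\bigr)\bigl(\sum_{j=0}^{r-1}t^{j}\bigr)$ satisfies the identity, and uniqueness of the quotient yields $(ii)$. Finally, $(iv)$ comes from confronting $(i)$ with $(ii)$: the support of the product in $(ii)$ is $\{iq+j:0\le i\le d-1,\ 0\le j\le r-1\}$, a union of blocks $[iq,\,iq+r-1]$. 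If $q=n/d>r$, then the integer $r$ --- which satisfies $r\le m$ since $m=(d-1)q+r-1\ge r$ (as $d\ge 2$, $q\ge 1$) --- lies strictly between the first block $[0,r-1]$ and every later block (all starting at $\ge q>r$), so $a_r=0$, contradicting $(i)$. Hence $n/d\le r$.

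The computations here are all elementary. The hard part is not any one calculation but the careful invocation of the blow-up formula \emph{over $\mathbb{Z}$ with its splitting} (needed to extract freeness in $(iii)$) and the correct bookkeeping of degree shifts so that the factor $t+\cdots+t^{c-1}$ appears with the right exponents. The conceptual crux, and the step I expect to be the most delicate to phrase cleanly, is the interplay in $(iv)$: it is precisely the geometric positivity of the even Betti numbers of $W$ established in $(i)$ that rules out the gaps which $(ii)$ would otherwise permit.
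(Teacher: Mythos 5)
Your proposal is correct and follows essentially the same route as the paper: the integral blow-up formula combined with the known cohomology of a $\mathbb{P}^r$-bundle over $\mathbb{P}^n$ yields freeness and the polynomial identity, which is then solved for $P(t)$, and $(iv)$ follows by exhibiting a forbidden gap $a_r=0$ with $0\le r\le m$ against the nonvanishing in $(i)$. The only cosmetic differences are that you prove $(i)$ via powers of an ample class rather than citing the general Kähler nonvanishing fact, and you justify $r\le m$ by the explicit formula for $m$ where the paper points to $a_{r-1}>0$ and $a_{n/d}>0$; both are sound.
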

 \begin{proof}
     $(i)$ follows from the general fact that for a compact Kahler manifold $Y$ of complex dimension $m$, we have $H^{2i}(Y,\mathbb{C})\neq 0$ for $0\leq i\leq m$.
 
 $\underline{(ii)}$ and $\underline{(iii)}$: By {\cite[Theorem 7.31]{MR2451566}}, and Lemma \ref{lemma:a=1}  (ii),  $$H^{2k}(X,\mathbb{Z})\cong H^{2k}(\mathbb{P}^{n+r},\mathbb{Z})\oplus(\oplus_{i=0}^{n/d}H^{2k-2i-2}(W,\mathbb{Z})).$$ Also, we have $H^{2k}(X,\mathbb{Z})=H^{2k}(\mathbb{P}^n\times \mathbb{P}^r, \mathbb{Z})$, as $X$ is a $\mathbb{P}^r$-bundle over $\mathbb{P}^n.$ So, $H^{2k}(W,\mathbb{Z})$ is free abelian, being a subgroup of the free abelian group $H^{2k}(X,\mathbb{Z})$.
 Also, for any $k$ we have $$h^{2k}(\mathbb{P}^n\times \mathbb{P}^r)=h^{2k}(\mathbb{P}^{n+r})+\sum_{i=1}^{n/d} a_{k-i}.$$ Hence, $$\sum_k h^{2k}(\mathbb{P}^n\times \mathbb{P}^r)t^k=\sum_k h^{2k}(\mathbb{P}^{n+r})t^k +\sum_{i=1}^{n/d}(\sum_k a_{k-i}t^{k-i})t^i,$$
 i.e, $$(\sum_{i=0}^n t^i)(\sum_{i=0}^n t^i)=(\sum_{i=0}^{n+r}t^i)+\sum_{i=1}^{n/d}P(t)t^i=(\sum_{i=0}^{n+r}t^i)+tP(t).\sum_{i=0}^{n/d-1}t^i,$$
 i.e, $$\frac{(t^{n+1}-1)(t^{r+1}-1)}{(t-1)^2}=\frac{(t^{n+r+1}-1)}{(t-1)} +tP(t)\frac{(t^{n/d}-1)}{(t-1)},$$
 i.e, $$(t^{n+1}-1)(t^{r+1}-1)=(t-1)(t^{n+r+1}-1)+t(t-1)(t^{n/d}-1)P(t),$$
 i.e., $$P(t)=\frac{(t^n-1)(t^r-1)}{(t^{n/d}-1)(t-1)}.$$ This proves \textit{(ii)}.
 
 \underline{(iv):} Suppose $n/d>r.$ As $d\geq 2,$ Lemma \ref{lemma:P} (ii)
 shows $a_{r-1}>0,$ $a_{n/d}>0$, $a_r=0$. This is impossible by  Lemma \ref{lemma:P} (i).
 \end{proof}
 \begin{lemma} \label{lemma:example}
 Let $n$ be a positive integer, $F$ a globally generated vector bundle over $\mathbb{P}^n$ with first Chern class $c_1\leq n, \mathbb{P}(F)\xrightarrow{\pi} \mathbb{P}^n$ the projection. Let $\phi:\mathbb{P}(F)\to \mathbb{P}(H^0(F))$ the morphism given by $|\mathcal{O}_{\mathbb{P}(F)}(1)|$. Suppose $\phi$ is surjective, and is a divisorial contraction. Let $S\subsetneq W_0\subset \mathbb{P}(H^0(F))$ be subvarieties such that $\phi$ is an isomorphism outside $W_0$, and for all $w\in W_0\setminus S$, $\pi (\phi^{-1}(w)_{red})(\cong \phi^{-1}(w)_{red})$ is a linear subvariety of $\mathbb{P}^n$ of dimension $n+1-c_1$. Let $0\leq k \leq $ dim $W_0$ and $0\to \mathcal{O}_{\mathbb{P}^n}^k\xrightarrow{s} F\to \mathcal{E}\to 0 $ be a short exact sequence of vector bundles, and $L=\mathbb{P}(H^0(\mathcal{E}))\hookrightarrow \mathbb{P}(H^0(F))$ be the codimension $k$ linear subvariety corresponding to $s$. If $L\cap S=\varnothing$, then $(L\cap W_0)_{red}$ is a smooth subvariety of $L$ of codimension $n+2-c_1$, and $\phi|_{\mathbb{P}(\mathcal{E})}: \mathbb{P}(\mathcal{E})\to L$ is the blow up of $L$ along $(L\cap W_0)_{red}$.
 \end{lemma}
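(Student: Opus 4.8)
The plan is to identify $\phi|_{\mathbb P(\mathcal E)}$ with the restriction of $\phi$ over the linear subspace $L$, and then to recognise this restriction as a blow-up by a contraction-theoretic criterion. I work in the Grothendieck convention, so $\pi_\ast\mathcal O_{\mathbb P(F)}(1)=F$ and $\phi$ is defined by the surjection $H^0(F)\otimes\mathcal O\twoheadrightarrow\mathcal O_{\mathbb P(F)}(1)$. Writing $V=H^0(F)$, a point of $\mathbb P(F)$ over $x\in\mathbb P^n$ is a one-dimensional quotient $q\colon F(x)\twoheadrightarrow k$, and $\phi(x,q)\in\mathbb P(V)$ is the composite $V\to F(x)\xrightarrow{q}k$. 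Let $W\subseteq V$ be the $k$-dimensional span of the components $s_1,\dots,s_k$ of $s$, i.e. the image of $H^0(\mathcal O^k)$. From $0\to\mathcal O^k\to F\to\mathcal E\to 0$ we get $H^0(\mathcal E)=V/W$, so that $L=\mathbb P(V/W)\hookrightarrow\mathbb P(V)$ is the codimension-$k$ linear subspace of points vanishing on $W$. First I would prove the set-theoretic identity $\phi^{-1}(L)=\mathbb P(\mathcal E)$: a point $(x,q)$ lies on $\mathbb P(\mathcal E)$ exactly when $q$ kills $\mathrm{Im}(k^k\to F(x))=\mathrm{span}_k\{s_i(x)\}$, i.e. $q(s_i(x))=0$ for all $i$, which is precisely the condition that $\phi(x,q)$ vanish on $W$, i.e. $\phi(x,q)\in L$. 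Consequently $\phi|_{\mathbb P(\mathcal E)}$ is the induced map $\phi_{\mathcal E}\colon\mathbb P(\mathcal E)\to L$, and $\phi_{\mathcal E}^{-1}(w)=\phi^{-1}(w)$ for every $w\in L$.

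Next I would establish that $\phi_{\mathcal E}$ is a birational, elementary, $K$-negative contraction with equidimensional non-trivial fibres. Since $\phi$ is a divisorial (hence birational) contraction, $\dim\mathbb P(F)=\dim\mathbb P(V)$, forcing $\dim V=n+\mathrm{rk}\,F$; a direct count then gives $\dim\mathbb P(\mathcal E)=n+\mathrm{rk}\,F-k-1=\dim L$. As $\phi$ is an isomorphism over $\mathbb P(V)\setminus W_0$, the restriction $\phi_{\mathcal E}$ is an isomorphism over $L\setminus W_0$, which is nonempty because $c_1\le n$ forces $L\not\subseteq W_0$; hence $\phi_{\mathcal E}$ is birational, surjective, and an isomorphism away from $Z:=(L\cap W_0)_{\mathrm{red}}$. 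The hypothesis $L\cap S=\varnothing$ gives $L\cap W_0\subseteq W_0\setminus S$, so for each $w\in L\cap W_0$ the fibre $\phi_{\mathcal E}^{-1}(w)=\phi^{-1}(w)$ has reduction $\mathbb P^{n+1-c_1}$; thus every non-trivial fibre of $\phi_{\mathcal E}$ has the constant dimension $n+1-c_1\ge 1$. Finally, $\mathbb P(\mathcal E)$ has Picard number $2$ whereas $L$ is a projective space, so $\phi_{\mathcal E}$ contracts a single ray; its contracted curves are lines in the fibre projective spaces, and since $E_{\mathcal E}$ is negative on such lines while its discrepancy over the smooth variety $L$ is positive, the ray is $K_{\mathbb P(\mathcal E)}$-negative.

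With all of this in place I would invoke the structure theorem for equidimensional birational Fano--Mori contractions of smooth projective varieties (Ando; see also Andreatta, Occhetta and Wi\'sniewski): an elementary $K$-negative birational contraction of a smooth projective variety all of whose non-trivial fibres have one and the same dimension $d$ is the blow-up of its (smooth) target along a smooth subvariety of codimension $d+1$. Applied to $\phi_{\mathcal E}$, with $d=n+1-c_1$ and target $L$ already smooth, this shows $\phi_{\mathcal E}$ is the blow-up of $L$ along a smooth centre of codimension $n+2-c_1$. That centre is the image of the exceptional divisor $E_{\mathcal E}=\phi^{-1}(L\cap W_0)$, namely $L\cap W_0$ as a set; being smooth it is reduced, hence equals $(L\cap W_0)_{\mathrm{red}}$, as claimed. (Once $\phi_{\mathcal E}$ is known to be a blow-up, Lemma \ref{lemma:kollar} gives an alternative identification of the centre.) The main obstacle is exactly this final step --- promoting the purely set-theoretic information that every fibre is a $\mathbb P^{n+1-c_1}$ into an honest smooth blow-up structure --- and the role of the hypothesis $L\cap S=\varnothing$ is to supply the uniform fibre type that the contraction criterion requires.
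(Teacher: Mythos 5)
Your overall strategy coincides with the paper's: identify $\phi|_{\mathbb{P}(\mathcal{E})}$ with the restriction of $\phi$ over $L$, use the dimension count and $c_1\le n$ to rule out $L\subset W_0$, use $L\cap S=\varnothing$ to get that every nontrivial fibre is (set-theoretically) a linearly embedded $\mathbb{P}^{n+1-c_1}$, and then conclude by a structure theorem for extremal birational contractions. However, the structure theorem you invoke in the final step is not correct as you state it. An elementary $K$-negative birational divisorial contraction of a smooth projective variety whose nontrivial fibres all have the same dimension $d$ need \emph{not} be a smooth blow-up: for instance, the resolution of a fourfold with a single $\tfrac{1}{2}(1,1,1,1)$ quotient singularity is an elementary $K$-negative divisorial contraction with a unique nontrivial fibre $\mathbb{P}^3$, and its target is singular. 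What the theorem of Andreatta--Occhetta (Theorem 5.2 of the reference used in the paper) actually requires is the numerical coincidence $\dim F = l(R)$, where $l(R)$ is the length of the contracted ray; by Wi\'sniewski's fibre-locus inequality $l(R)$ is the \emph{minimal} possible fibre dimension of a divisorial contraction, and it is only in this extremal case that one gets a smooth blow-up of a smooth centre of codimension $l(R)+1$. Equidimensionality alone, plus $K$-negativity, does not suffice.

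The missing ingredient is therefore the computation of the length, which is exactly what the paper supplies: for a line $C$ in a fibre $\psi^{-1}(w)_{red}\cong\mathbb{P}^{n+1-c_1}$ one has $\mathcal{O}_{\mathbb{P}(\mathcal{E})}(1).C=0$ (as $C$ is contracted by the morphism defined by $|\mathcal{O}_{\mathbb{P}(\mathcal{E})}(1)|$) and $\pi^*\mathcal{O}_{\mathbb{P}^n}(1).C=1$, so the relative anticanonical formula $-K_{\mathbb{P}(\mathcal{E})}=\pi^*\mathcal{O}_{\mathbb{P}^n}(n+1-c_1)\otimes\mathcal{O}_{\mathbb{P}(\mathcal{E})}(\operatorname{rk}\mathcal{E})$ gives $-K_{\mathbb{P}(\mathcal{E})}.C=n+1-c_1=\dim\psi^{-1}(w)$, i.e.\ $l(R)=\dim F$. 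You have all the inputs for this one-line calculation already on the table (the contracted curves are lines in the fibres, mapped isomorphically by $\pi$ to lines in $\mathbb{P}^n$), so the gap is easily repaired; but as written, your qualitative argument via positivity of discrepancies only yields $K$-negativity of the ray, not the equality of the fibre dimension with the length, and the conclusion does not follow from the hypotheses you have verified.
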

 \begin{proof}
 Since $k\leq$ dim $W_0$, we have $L\cap W_0\neq\varnothing$. So, $\psi=\phi|_{\mathbb{P}(\mathcal{E})}: \mathbb{P}(\mathcal{E})\to L$ is not an isomorphism.
 
 If $L\subset W_0$, then $$\hspace{-80 pt}\text{dim}(\mathbb{P}(F))-k=\text{dim}(\mathbb{P}(\mathcal{E}))=\text{dim} L+n+1-c_1$$$$=\text{dim}(\mathbb{P}(H^0(F)))-k +n+1-c_1=\text{dim}(\mathbb{P}(F))-k+n+1-c_1,$$ the last equality follows as $\text{dim}(\mathbb{P}(F))=\text{dim}(\mathbb{P}(H^0(F)))$, $\phi$ being birational. So, $c_1= n+1,$ contradicting $c_1\leq n$. So, $L\not \subset W_0$.

     So, $\psi$ is birational, and Ex($\psi)=$ Ex($\phi$) $\cap $ $\phi^{-1}(L)$ is a divisor in $\phi^{-1}(L)=\mathbb{P}(\mathcal{E})$, as $\phi^{-1}(L)\not \subset$ Ex($\phi$). So, if $R$ is the extremal ray of $\overline{NE}(\mathbb{P}(\mathcal{E}))$, which is not contracted by $\pi$, then $\psi$ is the contraction of $R$, and $\psi$ is divisorial. Let $w\in L\cap W_0$ and $C$ a line in $\psi^{-1}(w)_{red}(\cong \mathbb{P}^{n+1-c_1})$. Note that $\mathcal{O}_{\mathbb{P}(\mathcal{E})}(1).C=0$ as $C$ is contracted by the morphism given by $|\mathcal{O}_{\mathbb{P}(\mathcal{E})}(1)|$. Also, $\pi^*\mathcal{O}_{\mathbb{P}^n}(1).C=1$, as $C\xrightarrow{\pi} \mathbb{P}^n$ maps $C$ isomorphically to a line in $\mathbb{P}^n$. Since $-K_{\mathbb{P}(\mathcal{E})}=\pi^*\mathcal{O}_{\mathbb{P}^n}(n+1-c_1)\otimes \mathcal{O}_{\mathbb{P}(\mathcal{E})}(\text{rk }(\mathcal{E}))$, the length of $R$ is given by $l(R)=-K_{\mathbb{P}(\mathcal{E})}.C=n+1-c_1=\text{dim } \psi^{-1}(w).$ Now the Lemma follows by {\cite[Theorem 5.2]{andreatta2002special}}.
 \end{proof}
 \begin{lemma} \label{lemma:cute}
     Let $n$ be a positive integer, $F$ a globally generated vector bundle over $\mathbb{P}^n$ with $c_1(F)\leq n$, $\mathbb{P}(F)\xrightarrow{\phi}\mathbb{P}(H^0(F))$ the morphism given by $|\mathcal{O}_{\mathbb{P}(F)}(1)|$. Let $k\geq 0$, $0\to \mathcal{O}_{\mathbb{P}^n}^k\to F\to \mathcal{E}\to 0 $ be a short exact sequence of vector bundles, and suppose the other contraction of $\mathbb{P}(\mathcal{E})$ is a smooth blow up. Let $l$ be the largest integer such that $c_l(F)\neq 0$, and suppose $S\hookrightarrow \mathbb{P}(H^0(F))$ is a closed subset such that the following holds:
      for all $s\in S$, $\phi^{-1}(s)_{red}$ has a connected component which is not isomorphic to a projective space of dimension $\leq n-c_1(F)+1$,

     Then $\text{dim }S+1\leq k \leq \text{rk }F-l.$
 \end{lemma}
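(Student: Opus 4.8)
I would prove the two inequalities independently. The upper bound $k\le \operatorname{rk}F - l$ is purely formal and uses neither the blow-up hypothesis nor $S$: from the exact sequence and Whitney's formula, $c(F)=c(\mathcal{O}_{\mathbb{P}^n}^k)\cdot c(\mathcal{E})=c(\mathcal{E})$, so $c_l(\mathcal{E})=c_l(F)\neq 0$; since the Chern classes of $\mathcal{E}$ vanish above its rank, this forces $l\le \operatorname{rk}\mathcal{E}=\operatorname{rk}F-k$, that is $k\le \operatorname{rk}F-l$.

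The content lies in the lower bound $\dim S+1\le k$, which I would reduce to the single geometric claim that $L\cap S=\varnothing$, where $L=\mathbb{P}(H^0(\mathcal{E}))\hookrightarrow \mathbb{P}(H^0(F))$ is the codimension-$k$ linear subspace determined by the inclusion $\mathcal{O}_{\mathbb{P}^n}^k\hookrightarrow F$. Granting $L\cap S=\varnothing$: the case $S=\varnothing$ is trivial, and otherwise the projective dimension theorem applied to $L$ and a maximal-dimensional component of $S$ shows that $\dim S\ge k$ would force $S\cap L\neq\varnothing$; hence $\dim S\le k-1$.

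To prove $L\cap S=\varnothing$ I would first record that $\phi^{-1}(L)=\mathbb{P}(\mathcal{E})$: if $s_1,\dots,s_k$ are the sections defining $\mathcal{O}_{\mathbb{P}^n}^k\hookrightarrow F$, both loci equal $\{(x,[p]):p(s_i(x))=0 \text{ for all } i\}$. Thus for $w\in L$ the fibre $\phi^{-1}(w)$ coincides with the fibre $\psi^{-1}(w)$ of the other contraction $\psi:=\phi|_{\mathbb{P}(\mathcal{E})}$. Since by hypothesis $\psi$ is the blow up of a smooth variety along a smooth irreducible centre $Z$ of codimension $c$, each such fibre is empty, a reduced point, or isomorphic to $\mathbb{P}^{c-1}$; all of these are connected, so $w$ can lie in $S$ only if $c-1> n-c_1(F)+1$. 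It therefore suffices to bound $c-1\le n-c_1(F)+1$; in fact I would establish the equality $c-1=n-c_1(F)+1$.

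This equality is the crux, and I expect it to be the main obstacle. I would compute $-K_{\mathbb{P}(\mathcal{E})}\cdot\ell$ in two ways for a line $\ell$ in a fibre $\psi^{-1}(z)\cong\mathbb{P}^{c-1}$ over $z\in Z$. From $K_{\mathbb{P}(\mathcal{E})}=\psi^*K_L+(c-1)E_\psi$ and $E_\psi|_{\psi^{-1}(z)}=\mathcal{O}_{\mathbb{P}^{c-1}}(-1)$ one gets $-K_{\mathbb{P}(\mathcal{E})}\cdot\ell=c-1$; from $-K_{\mathbb{P}(\mathcal{E})}=(n+1-c_1)H_1+(\operatorname{rk}\mathcal{E})U$ together with $U\cdot\ell=0$ (as $\ell$ is $\psi$-contracted) one gets $-K_{\mathbb{P}(\mathcal{E})}\cdot\ell=(n+1-c_1)(H_1\cdot\ell)$. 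The missing input is $H_1\cdot\ell=1$, which I would extract from the integrality of the Cartier divisor $E_\psi$ in $\operatorname{Pic}(\mathbb{P}(\mathcal{E}))=\mathbb{Z}H_1\oplus\mathbb{Z}U$: writing $E_\psi=\alpha H_1+\beta U$ and intersecting with $\ell$ gives $\alpha(H_1\cdot\ell)=E_\psi\cdot\ell=-1$, and since $H_1$ is nef while $\pi$ maps $\psi^{-1}(z)$ isomorphically onto its image by Lemma \ref{lemma:fiber} (so $\ell$ is not $\pi$-contracted), $H_1\cdot\ell$ is a positive integer dividing $1$, hence equals $1$. Comparing the two expressions yields $c-1=n+1-c_1$, which finishes $L\cap S=\varnothing$ and hence the lower bound. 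The delicate point is precisely ruling out the a priori possibility that the blow-up fibres are projective spaces of dimension strictly larger than $n+1-c_1$, equivalently $H_1\cdot\ell\ge 2$ (a Veronese-type image); it is the rank-two Picard group of $\mathbb{P}(\mathcal{E})$ that makes this impossible.
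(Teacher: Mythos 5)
Your proposal is correct and follows essentially the same route as the paper: the upper bound via vanishing of Chern classes of $\mathcal{E}$ above its rank (the paper cites {\cite[Lemma 1.4]{anghel2013globally}} for the same fact), and the lower bound by reducing to $L\cap S=\varnothing$ and showing the relevant fibres are projective spaces of dimension exactly $n+1-c_1(F)$ — your anticanonical-degree computation of the codimension of the centre is precisely the content of Lemma \ref{lemma:a=1}(ii)--(iii), which the paper invokes instead of re-deriving. The one point where the paper is more careful is that $\phi|_{\mathbb{P}(\mathcal{E})}$ need not itself be the other contraction — its fibres over $L$ can be disconnected, so one must pass to the Stein factorization, whose nontrivial part is the blow up — but since the hypothesis on $S$ is phrased in terms of connected components of $\phi^{-1}(s)_{red}$, your argument survives this adjustment unchanged.
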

 \begin{proof}
     If $k>\text{rk }F-l,$ then there is a short exact sequence of vector bundles $0\to \mathcal{O}_{\mathbb{P}^n}^{\text{rk }F-l+1}\to F\to \mathcal{E}_1\to 0 $. By {\cite[Lemma 1.4]{anghel2013globally}}, $c_l(F)=0$, a contradiction. So $ k \leq \text{rk }F-l$. 
     
     To show $\text{dim }S+1\leq k$, it suffices to show $L\cap S=\varnothing$, where $L=\mathbb{P}(H^0(\mathcal{E}))\hookrightarrow \mathbb{P}(H^0(F))$ is the codimension $k$ linear subvariety. Note that $\psi=\phi|_{\mathbb{P}(\mathcal{E})}: \mathbb{P}(\mathcal{E})\to L$ is the morphism given by $|\mathcal{O}_{\mathbb{P}(\mathcal{E})}(1)|$, and $\psi^{-1}(y)\cong \phi^{-1}(y)$ for all $y\in L$.  Suppose the contrary, $L\cap S\neq\varnothing$. Let $ \mathbb{P}(\mathcal{E})\xrightarrow{\psi_1}Y\xrightarrow{\eta}L$ be the Stein factorization of $\psi$. Let $s\in L\cap S$. By assumption, $s$ is in image of $\psi$, so is in image of $\eta$. So, there is $y\in Y$ such that $\eta(y)=s$, and $\psi_1^{-1}(y)$ is not isomorphic to a projective space of dimension $\leq n+1-c_1$. 
     
     In particular dim $\psi_1^{-1}(y)>0$, so $\psi_1$ is not an isomorphism. So, $\psi_1$ is the other contraction of $\mathbb{P}(\mathcal{E})$, hence is a smooth blow up along 
 a subvariety $W$ of $Y$. Since $\psi_1$ is not an isomorphism, $|\mathcal{O}_{\mathbb{P}(\mathcal{E})}(1)|$ is not ample. By Lemma \ref{lemma:a=1}, $\text{codim }W=n+2-c_1$. So, $\psi_1^{-1}(y)=\mathbb{P}^{\text{codim }W-1}=\mathbb{P}^{n+1-c_1}$,a contradiction. So,  $L\cap S=\varnothing$, and we are done.
 \end{proof}
 \begin{lemma}\label{lemma:trivsum}
 Let $n$ be a positive integer and $\mathcal{E}_1$ a nonzero globally generated vector bundle over $\mathbb{P}^n$. If $\mathbb{P}(\mathcal{O}_{\mathbb{P}^n}\oplus \mathcal{E}_1)$ is a smooth blow up, then $E_1$ is direct sum of $\mathcal{O}_{\mathbb{P}^n}(1)$ with a trivial bundle.
 \end{lemma}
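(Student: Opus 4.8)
The plan is to realize $\mathcal{E}:=\mathcal{O}_{\mathbb{P}^n}\oplus\mathcal{E}_1$ as the normalized bundle of the Setup, extract numerical constraints from its trivial summand, and then invoke the classification of globally generated bundles with $c_1=1$. First I would note that $\mathcal{E}$ is globally generated, hence nef, while $\mathcal{E}(-1)=\mathcal{O}_{\mathbb{P}^n}(-1)\oplus\mathcal{E}_1(-1)$ is not nef (its summand $\mathcal{O}_{\mathbb{P}^n}(-1)$ is not), so $\mathcal{E}$ is exactly the bundle with $X\cong\mathbb{P}(\mathcal{E})$ occurring in the Setup; the trivial summand forces $\mathcal{E}$ non-ample, so Lemma \ref{lemma:a=1} gives $a=1$, $b=0$, $U=H_2=\phi^{*}\mathcal{O}_Y(1)$, and $\operatorname{codim}_Y W=n+2-c_1$ with $c_1=c_1(\mathcal{E}_1)$. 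Since $\mathcal{E}$ is globally generated, the blow-down $\phi$ is the morphism given by $|U|=|\mathcal{O}_{\mathbb{P}(\mathcal{E})}(1)|$, and since $\mathcal{O}_Y(1)$ is ample and globally generated the induced map $Y\to\mathbb{P}(H^0(\mathcal{O}_Y(1)))$ is finite. Combined with $H^0(\mathcal{O}_Y(1))=H^0(X,U)=H^0(\mathbb{P}^n,\mathcal{E})$ and $\dim Y=\dim X=n+\operatorname{rk}\mathcal{E}_1$, this yields the key inequality
$$n+\operatorname{rk}\mathcal{E}_1=\dim Y\le h^0(\mathcal{E})-1=h^0(\mathcal{E}_1).$$

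The main step is to pin down $c_1$. The projection $\mathcal{E}\twoheadrightarrow\mathcal{O}_{\mathbb{P}^n}$ onto the trivial summand is a quotient line bundle, so it defines a section $\sigma\colon\mathbb{P}^n\hookrightarrow X$ with $\sigma^{*}U=\mathcal{O}_{\mathbb{P}^n}$; thus $U$ restricts trivially on $\sigma(\mathbb{P}^n)\cong\mathbb{P}^n$, and as $\phi$ is given by $|U|$ it contracts $\sigma(\mathbb{P}^n)$ to a single point $p_0\in Y$. For a smooth blow-up the only positive-dimensional fibres of $\phi$ are the $\mathbb{P}^{c-1}$'s over points of $W$, where $c=\operatorname{codim}_Y W$; since $\dim\sigma(\mathbb{P}^n)=n\ge 1$ we must have $p_0\in W$ and a closed embedding $\mathbb{P}^n\hookrightarrow\phi^{-1}(p_0)\cong\mathbb{P}^{c-1}$, forcing $n\le c-1=n+1-c_1$, i.e. $c_1\le 1$. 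The case $c_1=0$ is impossible: then $\mathcal{E}_1$ is trivial, $h^0(\mathcal{E}_1)=\operatorname{rk}\mathcal{E}_1$, and the displayed inequality gives $n\le 0$. Hence $c_1(\mathcal{E}_1)=1$.

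Finally I would invoke the (unconditional) classification of globally generated vector bundles on $\mathbb{P}^n$ with $c_1=1$: any such bundle is isomorphic to $\mathcal{O}_{\mathbb{P}^n}(1)\oplus\mathcal{O}_{\mathbb{P}^n}^{\oplus a}$ or to $T_{\mathbb{P}^n}(-1)\oplus\mathcal{O}_{\mathbb{P}^n}^{\oplus a}$. The second possibility is excluded for $n\ge 2$ once more by the dimension inequality: there $\operatorname{rk}\mathcal{E}_1=n+a$ while $h^0(\mathcal{E}_1)=n+1+a$, so $n+\operatorname{rk}\mathcal{E}_1=2n+a>n+1+a=h^0(\mathcal{E}_1)$, contradicting $n+\operatorname{rk}\mathcal{E}_1\le h^0(\mathcal{E}_1)$; and for $n=1$ one has $T_{\mathbb{P}^1}(-1)\cong\mathcal{O}_{\mathbb{P}^1}(1)$, so the two families coincide. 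Therefore $\mathcal{E}_1\cong\mathcal{O}_{\mathbb{P}^n}(1)\oplus\mathcal{O}_{\mathbb{P}^n}^{\oplus a}$, which is the assertion.

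I expect the only genuinely load-bearing step to be the middle one, where the trivial summand is turned into a contracted $\mathbb{P}^n$ sitting inside a single blow-up fibre $\mathbb{P}^{c-1}$; this is what collapses the a priori range of $c_1$ down to $c_1=1$. The remaining ingredients are bookkeeping with Lemma \ref{lemma:a=1}, the elementary dimension count $\dim Y\le h^0(\mathcal{E})-1$, and the cited classification of globally generated bundles of first Chern class one; care is needed only in correctly identifying $\phi$ with the morphism defined by $|U|$ and in checking that $U$ restricts trivially on the section coming from the trivial summand.
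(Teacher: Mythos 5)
Your proposal is correct, and its core is the same as the paper's: the trivial summand produces a copy of $\mathbb{P}^n$ in $X$ on which $U=\mathcal{O}_{\mathbb{P}(\mathcal{E})}(1)$ is trivial, this $\mathbb{P}^n$ is contracted by the blow-down and must sit inside a fibre $\mathbb{P}^{\operatorname{codim}W-1}$, which together with $\operatorname{codim}W=n+2-c_1$ from Lemma \ref{lemma:a=1} forces $c_1\le 1$, and one then invokes the classification of globally generated bundles with $c_1\le 1$. (The paper gets this $\mathbb{P}^n$ as a fibre of the morphism given by $|U|$ via Lemma \ref{lemma:fiber}, which pins down $\operatorname{codim}W=n+1$ exactly and so gives $c_1=1$ outright; your section $\sigma$ is the same subvariety, and your separate exclusion of $c_1=0$ compensates for only getting an inequality.) Where you genuinely diverge is the last step: the paper rules out $\mathcal{E}_1\cong T_{\mathbb{P}^n}(-1)\oplus\mathcal{O}^{\oplus a}$ by computing $s_n(\mathcal{E}_1)=0$ for $n\ge 2$ and contradicting the positivity of $s_n$ from Lemma \ref{lemma:segre}, whereas you use the finiteness of $Y\to\mathbb{P}(H^0(\mathcal{O}_Y(1)))$ to get $\dim Y\le h^0(\mathcal{E})-1$, i.e. $n+\operatorname{rk}\mathcal{E}_1\le h^0(\mathcal{E}_1)$, which fails for $T_{\mathbb{P}^n}(-1)\oplus\mathcal{O}^{\oplus a}$ when $n\ge 2$. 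Both exclusions are sound; yours is more elementary (no Segre class computation) at the modest cost of verifying that $\mathcal{O}_Y(1)$ is globally generated, which does follow since $U=\phi^{*}\mathcal{O}_Y(1)$ is globally generated and $\phi_{*}\mathcal{O}_X=\mathcal{O}_Y$.
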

 \begin{proof}
     Let $\mathcal{E}=\mathcal{O}_{\mathbb{P}^n}\oplus \mathcal{E}_1$. Suppose $\mathbb{P}(\mathcal{E})$ is blow up of a smooth projective variety $Y$ along a smooth subvariety $W$. We are in the situation of the setup. Since $\mathcal{E}$ is nonample, we have codim $W=n+2-c_1$ by Lemma \ref{lemma:a=1}. By Lemma \ref{lemma:fiber}, the morphism given by $|\mathcal{O}_{\mathbb{P}(\mathcal{E})}(1)|$ has one fiber isomorphic to $\mathbb{P}^n$, so the same is true for contraction of $\mathbb{P}(\mathcal{E})$ given by Stein factorization of this morphism. It follows that the blow up of $Y$ along $W$ has a fiber over $Y$ of dimension $n$, hence codim $W=n+1$. So, $n+2-c_1=n+1$, so $c_1=1$. So, as $\mathcal{E}_1$ is globally generated, it must be of the form $\mathcal{O}_{\mathbb{P}^n}^{r-1}\oplus \mathcal{O}_{\mathbb{P}^n}(1)$ or $\mathcal{O}_{\mathbb{P}^n}^{r-1}\oplus T_{\mathbb{P}^n}(-1)$. If $n=1$ both are same, and if $n>1$ then the $2$nd case cannot occur as $s_n(\mathcal{E}_1)=s_n(\mathcal{E})>0$ by Lemma \ref{lemma:segre}.
 \end{proof}

 \begin{lemma}\label{lemma:ci}
 In the notations of the setup, suppose $Y=\mathbb{P}^n$. Then $W$ is nondegenerate and is not a complete intersection, unless $W$ is linear.
 \end{lemma}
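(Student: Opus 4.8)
The plan is to place ourselves in the situation of Lemma~\ref{lemma:P}, so that $Y=\mathbb{P}^{n+r}$; by Lemma~\ref{lemma:a=1}(vi) we then have $a=1$ and $b=0$, whence $U=H_2$ and $H_1=dH_2-E$ in $\operatorname{Pic}(X)$. The first step is to identify the exceptional case with $d=1$. If $d=1$ the map $\pi\circ\phi^{-1}$ is given by an $(n+1)$-tuple of linear forms, hence is a linear projection, and by Lemma~\ref{lemma:a=1}(iv) its base locus $W$ is a linear subspace; this is precisely the case ``$W$ linear''. I would therefore assume $d\geq 2$ and prove that $W$ is nondegenerate and not a complete intersection (nondegeneracy already gives nonlinearity, since $\operatorname{codim}W=e+1\geq 2$).

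For nondegeneracy I would compute $h^0(\mathcal{I}_W(1))$ after pulling back to $X$. Since $\phi_*\mathcal{O}_X(-E)=\mathcal{I}_W$, the projection formula gives $h^0(Y,\mathcal{I}_W(1))=h^0(X,H_2-E)$. Now $E=dH_2-H_1$, so
\[
H_2-E=H_1-(d-1)U=\pi^*\mathcal{O}_{\mathbb{P}^n}(1)\otimes\mathcal{O}_{\mathbb{P}(\mathcal{E})}(-(d-1)),
\]
and pushing forward by $\pi$ yields $\mathcal{O}_{\mathbb{P}^n}(1)\otimes\pi_*\mathcal{O}_{\mathbb{P}(\mathcal{E})}(-(d-1))=0$ because $d-1\geq 1$. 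Hence $h^0(\mathcal{I}_W(1))=0$ and $W$ is nondegenerate.

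For the complete intersection assertion I would first upgrade Lemma~\ref{lemma:P} by noting that $W$ carries no odd cohomology: the blow-up formula used there expresses $H^{\mathrm{odd}}(X)$ as $H^{\mathrm{odd}}(\mathbb{P}^{n+r})\oplus\bigoplus_i H^{\mathrm{odd}-2i-2}(W)$, and the left-hand side vanishes because $X$ is a $\mathbb{P}^r$-bundle over $\mathbb{P}^n$; thus $H^{\mathrm{odd}}(W)=0$ and the Betti data of $W$ is given entirely by $P(t)=(1+t^e+\dots+t^{(d-1)e})(1+\dots+t^{r-1})$ with $e=n/d\leq r$. Suppose $W$ were a complete intersection. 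Iterating the Lefschetz hyperplane theorem over its defining hypersurfaces gives $H^k(W)\cong H^k(\mathbb{P}^{n+r})$ for $k<m:=\dim W$, and Poincar\'e duality gives the same for $k>m$; hence $b_{2i}(W)=1$ for every $i$ with $2i\neq m$. On the other hand the smallest index at which a coefficient of $P$ exceeds $1$ is $i=e$, and one checks that $a_e=2$ exactly when $e<r$. A direct computation with $m=(d-1)e+r-1$ shows that, when $e<r$, the equality $2e=m$ holds only for $(d,r)=(2,e+1)$. Therefore, unless $e=r$ or $(d,r)=(2,e+1)$, the value $a_e=2$ is an even Betti number off the middle degree, contradicting the Lefschetz constraint; so $W$ is not a complete intersection in all of these cases.

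The remaining, and genuinely harder, work is the two boundary cohomologies, where the Betti numbers alone are compatible with a complete intersection. If $e=r$ then $P(t)=1+t+\dots+t^{n-1}$, so $W$ has the integral cohomology of $\mathbb{P}^{n-1}$; if instead $(d,r)=(2,e+1)$ then $P(t)=1+\dots+t^{e-1}+2t^e+t^{e+1}+\dots+t^{2e}$, the cohomology of a smooth even-dimensional quadric $Q^{2e}$. In both cases the primitive middle cohomology of $W$ has rank at most $1$ (rank $0$ in the first case, rank $1$ in the second). I would finish by invoking the classification of smooth complete intersections whose primitive middle cohomology has rank $\leq 1$: these are exactly the linear spaces and the quadric hypersurfaces. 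Since $\operatorname{codim}W=e+1\geq 2$, no quadric hypersurface can occur; hence in the first case $W$ must be linear (the excluded alternative), while in the second case no complete intersection is possible at all, so $W$ is not a complete intersection (and, its Betti numbers differing from those of $\mathbb{P}^m$, it is not linear either). The main obstacle is precisely supplying this cohomological classification input in the two boundary cases; everything else reduces to the Lefschetz comparison above together with the vanishing $\pi_*\mathcal{O}_{\mathbb{P}(\mathcal{E})}(-(d-1))=0$.
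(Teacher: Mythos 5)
Your argument is sound in outline but takes a genuinely different, and considerably heavier, route than the paper for the complete--intersection part. The paper's proof is a two-line dimension count: nondegeneracy is quoted from Ein's Proposition 2.5(a) (your direct computation $h^0(\mathcal{I}_W(1))=h^0(X,H_1-(d-1)U)=0$ is essentially the same verification, and is correct); then, if $W$ were a complete intersection, the identity $h^0(\mathbb{P}^{n+r},\mathcal{I}_W(d))=h^0(X,H_1)=n+1$ combined with the fact that a nondegenerate complete intersection cut out by degree-$d$ forms satisfies $h^0(\mathcal{I}_W(d))\leq \operatorname{codim}W$ forces $c_1=m-r+2\leq 1$, whence $\mathcal{E}\cong\mathcal{O}(1)\oplus\mathcal{O}^r$ and $W$ is linear. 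Your route instead goes through the Betti polynomial $P(t)$ of Lemma \ref{lemma:P} and the Lefschetz constraints $b_{2i}(W)=1$ for $2i\neq m$; the generic case ($a_e=2$ with $2e\neq m$) is handled correctly, and your identification of the two boundary cases $e=r$ and $(d,r)=(2,e+1)$ is also right. What your approach buys is some structural information about $W$ (its full Betti table); what it costs is the final input you yourself flag: the classification of smooth complete intersections whose primitive middle cohomology has rank at most $1$ as exactly the linear spaces and quadric hypersurfaces. That statement is true (it follows from the standard degree formula for the middle Betti number of a complete intersection, which exceeds that of projective space by at least $2$ once any defining degree is $\geq 3$ or once two degrees are $\geq 2$), but as written it is an unproven black box and is precisely the crux of your two hardest cases; without supplying it your proof is incomplete. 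The paper's dimension count avoids this entirely and also works uniformly without separating boundary cases. One further small point: your reduction of the ``$W$ linear'' alternative to ``$d=1$'' is fine, but note it relies on Lemma \ref{lemma:a=1}(iv) to identify $W$ with the base locus, which in turn uses Lemma \ref{lemma:kollar}; you should say so explicitly since the base locus is a priori only a scheme.
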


\begin{proof}
    Suppose $W$ is nonlinear. The same argument as in {\cite[Proposition 2.5(a)]{ein1989some}} shows $W$ is nondegenerate. If $W$ is a complete intersection, then as in the proof of {\cite[Proposition 2.5(a)]{ein1989some}}, we have $n+1=h^0(\mathbb{P}^{n+r},\mathcal{I}_W(d))\leq n+r-m$, so $m-r+1\leq 0$. By Lemma \ref{lemma:a=1}, we get $c_1=m-r+2\leq 1$. Now as in the proof of Lemma \ref{lemma:trivsum} we conclude that $\mathcal{E}$ is direct sum of $\mathcal{O}_{\mathbb{P}^n}(1)$ with a trivial bundle, so $W$ is linear, a contradiction.
\end{proof}

\section{Examples}

Following are some examples of projective bundles over projective spaces which has a smooth blow up structure.
\begin{example}\label{0}
    For positive integers $m,n$, $X=\mathbb{P}_{\mathbb{P}^n}(\mathcal{O}^m\oplus \mathcal{O}(1))$ is blow up of $\mathbb{P}^{n+m}$ along a linear subvariety of dimension $m-1$.
\end{example} 
\begin{proof}
    This is well-known, see for example {\cite[Proposition 9.11]{eisenbud20163264}}.
\end{proof}
\begin{example}\label{1}
    Let $0\leq k \leq 2$, $W_0\hookrightarrow \mathbb{P}^5$ be the Segre embedding of $\mathbb{P}^1\times \mathbb{P}^2 $. Let $0\to \mathcal{O}_{\mathbb{P}^2}^k\to T_ {\mathbb{P}^2}(-1)^{\oplus2}\to E\to 0 $ be a short exact sequence of vector bundles. Then $X=\mathbb{P}_{\mathbb{P}^2}(E)$ is blow up of $H^{5-k}$ along $H^{5-k}\cap W_0$, where $H^{5-k}$ is a $(5-k)$-dimensional linear subspace in $\mathbb{P}^5$ such that $H^{5-k}\cap W_0$ is smooth and has codimension $2$ in $H^{5-k}$. For each $0\leq k\leq 2$, a general map $\mathcal{O}_{\mathbb{P}^2}^k\to T_{\mathbb{P}^2}(-1)^{\oplus2}$ gives such a short exact sequence.
\end{example} 
    \begin{proof}
        This is the content of \cite{ray2020projective}, also follows from Lemma \ref{lemma:example}.
    \end{proof}
     \begin{example}\label{2}
         Let $0\leq k \leq 3$, $W_0\hookrightarrow \mathbb{P}^9$ be the Plucker embedding of $Gr(2,5)$. Let $0\to \mathcal{O}_{\mathbb{P}^4}^k\to \wedge^2 T_{\mathbb{P}^4}(-1)\to E\to 0 $ be a short exact sequence of vector bundles. Then $X=\mathbb{P}_{\mathbb{P}^4}(E)$ is blow up of $H^{9-k}$ along $H^{9-k}\cap W_0$, where $H^{9-k}$ is a $(9-k)$-dimensional linear subspace in $\mathbb{P}^9$ such that $H^{9-k}\cap W_0$ is smooth and has codimension $3$ in $H^{9-k}$. For each $0\leq k\leq 3$, a general map $\mathcal{O}_{\mathbb{P}^4}^k\to \wedge^2 T_{\mathbb{P}^4}(-1)$ gives such a short exact sequence.
     \end{example} 
     \begin{proof}
         In Lemma \ref{lemma:example}, we take $F= \wedge^2 T_{\mathbb{P}^4}(-1)$. By {\cite[Theorem 1.2]{galkin2022projective}}, and the geometric description of $\phi$ in its proof, we see that $S=\varnothing,$ $W_0=$ Plücker embedding of Gr$(2,5)$ in $\mathbb{P}(\text{Alt}(5))\cong{P}^9$ satisfies the hypothesis of Lemma \ref{lemma:example}. So Lemma \ref{lemma:example} completes the proof of everything except the last statement. The last statement follows from {\cite[Lemma 1.4]{anghel2013globally}}, as $c_i(F)=0$ for $i\geq 4$.
     \end{proof}
     \begin{example}\label{3}
         Let $V=M_{3\times 4}(k)$, $W_0=\{[A]\in\mathbb{P}(V^*)| rkA\leq2\}, S=\{[A]\in\mathbb{P}(V^*)| rkA\leq1\}$. There is an identification $H^0(\mathbb{P}^3, T_{\mathbb{P}^3}(-1)^{\oplus3})=V, $ such that the following holds:
     
     Let $0\to \mathcal{O}_{\mathbb{P}^3}^6\xrightarrow{s} T_{\mathbb{P}^3}(-1)^{\oplus3}\to E\to 0 $ be a short exact sequence of vector bundles, such that $L\cap S=\varnothing,$ where $L=\mathbb{P}(H^0({E}))\hookrightarrow \mathbb{P}(V)$ is the codimension $6$ linear subspace corresponding to $s$.  Then $(L\cap W_0)_{red}$ is smooth and $X=\mathbb{P}_{\mathbb{P}^3}(E)$ is blow up of $L$ along $W:=(L\cap W_0)_{red}$. A general map $s$ gives such a short exact sequence with the above property.
     \end{example}} 
     \begin{proof}
         In Lemma \ref{lemma:example}, we take $F= T_{\mathbb{P}^3}(-1)^{\oplus3}$. Euler exact sequence gives short exact sequence $0\to \mathcal{O}_{\mathbb{P}^3}(-1)^3\to \mathcal{O}_{\mathbb{P}^3}^{4\times 3}\to T_{\mathbb{P}^3}(-1)^{\oplus3}$. This gives an identification $H^0(\mathbb{P}^3, T_{\mathbb{P}^3}(-1)^{\oplus3})=H^0(\mathbb{P}^3, \mathcal{O}_{\mathbb{P}^3}^{4\times 3}) =V$. Note that $S$ is projectively equivalent to the Segre embedding of $\mathbb{P}^2\times \mathbb{P}^3$, so dim $S=5.$ By {\cite[Theorem 1.1]{galkin2022projective}} and the geometric description of $\phi$ in its proof, $S$ and $W_0$ satisfies the hypothesis of Lemma \ref{lemma:example}. So Lemma \ref{lemma:example} completes the proof of everything except the last statement. The last statement follows from {\cite[Lemma 1.4]{anghel2013globally}}, as $c_i(F)=0$ for $i\geq 4$, and the fact that dim $S=5,$ so $L\cap S=\varnothing$ for a general linear subvariety $L$ of codimension $6$.
     \end{proof}
     \begin{example}\label{4}
         $i)$ For a positive integer $n$, $X=\mathbb{P}_{\mathbb{P}^n}(\mathcal{O}_{\mathbb{P}^n}(1) \oplus T_{\mathbb{P}^n}(-1))$ is the blow up of a smooth quadric in $\mathbb{P}^{2n+1}$ along a linear subvariety of dimension $n$.
     
     $ii)$ If 
\begin{equation*}
0\longrightarrow \mathcal{O}_{\mathbb{P}^n}\xlongrightarrow {s}{\mathcal{O}_{\mathbb{P}^n}(1) \oplus T_{\mathbb{P}^n}(-1)}\longrightarrow F\longrightarrow 0
\end{equation*}
 is an short exact sequence of vector bundles, where $s$ \medspace is a nowhere vanishing section, then $X=\mathbb{P}_{\mathbb{P}^n}(F)$ is the blow up of the smooth quadric in $\mathbb{P}^{2n}$ along a linear subvariety of dimension $n-1$.
     \end{example} 
 \begin{proof}
     This is {\cite[Corollary 3.3(1)]{bansal2024extremal}}.
 \end{proof}
 \begin{example}\label{5}
      Let $ n\geq 3$. Then:
     
(i) $X=\mathbb{P}_{\mathbb{P}^n}(\mathcal{O}_{\mathbb{P}^n}(1) \oplus \Omega_{\mathbb{P}^n}(2)) $ is the blow-up of $\emph{Gr}(2,n+2)$ along $\emph{Gr}(2,n+1)$.    

(ii) If $s$ \medspace is a nowhere vanishing section of $ \mathcal{O}_{\mathbb{P}^n}(1)\oplus \Omega_{\mathbb{P}^n}(2)$ and the vector bundle $E$ is defined by the exact sequence 
$$ 0\longrightarrow \mathcal{O}_{\mathbb{P}^n}(1) \longrightarrow \mathcal{O}_{\mathbb{P}^n}(1) \oplus \Omega_{\mathbb{P}^n}(2) \longrightarrow E \longrightarrow 0  ,$$ then $X= \mathbb{P}_{\mathbb{P}^n}(E)$ is the blow-up of the $ H \cap \emph{Gr}(2,n+2)$ along $ H \cap \emph{Gr}(2,n+1)$, where $ H \cap \emph{Gr}(2,n+2)$ is a smooth hyperplane section of $ \emph{Gr}(2,n+2)$ under the Plücker embedding, such that $ H\cap \emph{Gr}(2,n+1)$ is also smooth.
 \end{example}

 \begin{proof}
     This is {\cite[Corollary 3.3(2)]{bansal2024extremal}}.
 \end{proof}

 \begin{remark}
     Example \ref{0} is well-known, \ref{1} has appeared in \cite{ray2020projective}, \ref{4} and \ref{5} have appeared in \cite{bansal2024extremal}. \ref{2} with $k=0$ have appeared in {\cite[Theorem 2]{galkin2022projective}}. The rest of the examples are new, to the author's best knowledge.
 \end{remark}
 \begin{remark}
     $W$ in example \ref{3} is a codimension $2$ smooth subvariety of $\mathbb{P}^5$, which is not a complete intersection by Lemma \ref{lemma:ci}. Using arguments similar to {\cite[Example 3.1]{decker1995surfaces}} and {\cite[Table 7.3]{decker1995surfaces}}, one can show $W$ is a Bordiga 3-fold.
 \end{remark}
 \begin{remark}
     The bundle $E$ in example \ref{2} with $k=3$ is a Tango bundle on $\mathbb{P}^4$.
 \end{remark}
 \section{Proofs}

We first prove the following result.
 \begin{theorem} \label{theorem:cases}
     Let $n\geq 3, k\geq 0$ be integers, and $$0\to \mathcal{O}^k\to F\to \mathcal{E}\to 0$$ be a short exact sequence of vector bundles on $\mathbb{P}^n$, where $F$ is one of the following:
     \begin{enumerate}
         \item $p,q\geq 0$ are integers, $a_i,b_j$ are positive integers for $1\leq i\leq p, 1\leq j\leq q$, and $\sum_i a_i + \sum_j b_j\leq n$. $F=\oplus_i\mathcal{O}_{\mathbb{P}^n}(a_i)\oplus(\oplus_jP_{\mathcal{O}(b_j)}).$
         \item $F=\Omega_{\mathbb{P}^n}(2).$
         \item $F=\wedge^2(T_{\mathbb{P}^n}(-1))$
         \item $F=\mathcal{O}_{\mathbb{P}^n}(1)\oplus \Omega_{\mathbb{P}^n}(2).$
         \item $F=T_{\mathbb{P}^n}(-1)\oplus \Omega_{\mathbb{P}^n}(2).$
         \item $F=\mathcal{O}_{\mathbb{P}^n}(1)\oplus\wedge^2(T_{\mathbb{P}^n}(-1)).$
         \item $F=T_{\mathbb{P}^n}(-1)\oplus\wedge^2(T_{\mathbb{P}^n}(-1)).$
     \end{enumerate}

     Suppose the other contraction of $X=\mathbb{P}(\mathcal{E})$ is a smooth blow up and $\mathcal{E}(-1)$ is not nef. Then $X$ is one of the examples of \S 3.
 \end{theorem}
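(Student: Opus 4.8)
The plan is to argue case by case on the seven possibilities for $F$, in each case exploiting that the Segre classes of $\mathcal{E}$ coincide with those of $F$ (the sequence $0\to\mathcal{O}^k\to F\to\mathcal{E}\to 0$ gives $c(\mathcal{E})=c(F)$, hence $s_i(\mathcal{E})=s_i(F)$) together with the explicit fibre geometry of the tautological morphism $\phi:\mathbb{P}(F)\to\mathbb{P}(H^0(F))$. First I would record the common reductions: every listed $F$ is globally generated, so $\mathcal{E}$ is globally generated; since $H^1(\mathcal{O}_{\mathbb{P}^n}^k)=0$ we get $h^0(\mathcal{E})=h^0(F)-k$, so $L:=\mathbb{P}(H^0(\mathcal{E}))$ is a codimension-$k$ linear subspace of $\mathbb{P}(H^0(F))$ and $\psi:=\phi|_{\mathbb{P}(\mathcal{E})}\colon\mathbb{P}(\mathcal{E})\to L$ is the morphism defined by $|\mathcal{O}_{\mathbb{P}(\mathcal{E})}(1)|$. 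Because the other contraction is a smooth (hence divisorial) blow up, $\mathcal{E}$ is not ample, so Lemma \ref{lemma:a=1} gives $a=1,b=0$ and $\operatorname{codim}W=n+2-c_1(\mathcal{E})$, with $c_1(\mathcal{E})=c_1(F)\le n$ in all seven cases. Moreover Lemma \ref{lemma:segre} forces $(-1)^is_i(F)=d^{\,n-i}\alpha$ for $c_1-1\le i\le n$, a positive geometric progression depending only on $F$, which I would use as a numerical filter.

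Second, I would treat case (1) (split plus $P_B$ summands). Here I feed the individual factors $\mathbb{P}(\mathcal{O}(a_i))$ and $\mathbb{P}(P_{\mathcal{O}(b_j)})$ into Lemma \ref{lemma:fiber}, which presents each fibre of $\phi$ as the intersection $\bigcap_i S_i$ of the fibres of the summands: for a line bundle $S_i$ is a point, while for $P_{\mathcal{O}(1)}=T_{\mathbb{P}^n}(-1)$ it is a hyperplane. Requiring the general nontrivial fibre of the blow up to be a single linear $\mathbb{P}^{\,n+1-c_1}$, and using Lemma \ref{lemma:trivsum} to strip genuine trivial summands, should force $F$ (after absorbing $\mathcal{O}^k$) into the shapes $\mathcal{O}(1)\oplus(\text{trivial})$, $\mathcal{O}(1)\oplus T_{\mathbb{P}^n}(-1)$, or $T_{\mathbb{P}^3}(-1)^{\oplus 3}$, matching Examples \ref{0}, \ref{4} and \ref{3}; the positivity of the top Segre class in Lemma \ref{lemma:segre} eliminates the stray $T(-1)$-only possibilities exactly as in the proof of Lemma \ref{lemma:trivsum}.

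Third, I would handle the non-split and mixed cases. For $F=\wedge^2(T_{\mathbb{P}^n}(-1))$ the morphism $\phi$ is birational (one checks $h^0(F)=\binom{n+1}{2}=n+\operatorname{rk}F$), so I invoke the geometric description of $\phi$ from \cite{galkin2022projective} to verify the hypotheses of Lemmas \ref{lemma:example} and \ref{lemma:cute}: for $n=4$ this yields Example \ref{2}, while for $n=3$ the isomorphism $\wedge^2(T_{\mathbb{P}^3}(-1))\cong\Omega_{\mathbb{P}^3}(2)$ reduces to case (2), where a direct Chern computation gives $s_3=0$, contradicting $(-1)^ns_n=\alpha>0$; I would check that the progression test (or the fibre analysis below) disposes of $\wedge^2(T(-1))$ and $\Omega(2)$ for the remaining $n$ as well. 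For the mixed cases I again use Lemma \ref{lemma:fiber} to write fibres as intersections of a hyperplane (from the $\mathcal{O}(1)$ or $T(-1)$ factor) with the fibres of $\Omega(2)$ or $\wedge^2(T(-1))$: case (4) reproduces the configuration of Example \ref{5}, whereas in cases (5), (6), (7) the general contracted fibre fails to be linear of dimension $n+1-c_1$, so no smooth blow up can occur. On $\mathbb{P}^3$ these fold into cases (4) and (5) via the isomorphism above, and case (5) on $\mathbb{P}^3$ must still be excluded directly.

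The genuinely delicate point is this last exclusion: the spurious mixed cases are not eliminated numerically. For instance $F=T_{\mathbb{P}^3}(-1)\oplus\Omega_{\mathbb{P}^3}(2)$ (case (5), $n=3$) satisfies every constraint of Lemma \ref{lemma:segre} with $d=\alpha=2$ (one computes $(-1)^is_i=2,4$ for $i=3,2$ and $3<8$ for the bottom class), so it cannot be ruled out by Segre classes alone and must be killed geometrically. The main obstacle is therefore to build a precise dictionary for the contracted fibres of the tautological morphisms of $\Omega(2)$ and $\wedge^2(T(-1))$ — using the descriptions in \cite{galkin2022projective} — and for how these intersect the hyperplane coming from the split factor under Lemma \ref{lemma:fiber}, so as to show that in cases (5)--(7) the resulting fibre is reducible or of the wrong dimension. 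Making this fibre analysis clean and uniform in $n$ is where I expect the real work to lie.
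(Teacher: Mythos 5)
Your proposal assembles many of the right ingredients (equality of Segre classes, the positivity filter of Lemma \ref{lemma:segre}, the fibre descriptions of Lemma \ref{lemma:fiber}, and the geometric input from \cite{galkin2022projective}), but it is missing the one mechanism that actually closes every case in the paper: the two--sided count of Lemma \ref{lemma:cute}, namely $\dim S+1\le k\le \operatorname{rk}F-l$, where $S$ is the locus of points whose $\phi$-fibre has a component that is not a projective space of dimension $\le n+1-c_1(F)$ and $l$ is the top nonvanishing Chern class of $F$. The lower bound holds because a smooth blow-up structure forces every nontrivial fibre of $\psi=\phi|_{\mathbb{P}(\mathcal{E})}$ to be a linear $\mathbb{P}^{n+1-c_1}$ (via $\operatorname{codim}W=n+2-c_1$ from Lemma \ref{lemma:a=1}), so the codimension-$k$ linear section $L=\mathbb{P}(H^0(\mathcal{E}))$ must miss $S$ entirely; the upper bound comes from {\cite[Lemma 1.4]{anghel2013globally}}, since $k>\operatorname{rk}F-l$ would force $c_l(F)=0$. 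You invoke Lemma \ref{lemma:cute} only in case (3) and propose, for cases (5)--(7), to show instead that ``the general contracted fibre fails to be linear.'' That replacement does not work: in case (5), for example, the bad fibres (whole hyperplanes $\mathbb{P}^{n-1}$) occur only over $S=\mathbb{P}(H^0(T_{\mathbb{P}^n}(-1)))$, the locus where the $\Omega_{\mathbb{P}^n}(2)$-coordinate vanishes, and a general contracted fibre away from $S$ can perfectly well be a linear space of the expected dimension; moreover, after cutting by $L$ the bad fibres could a priori be avoided altogether. What rules this out is precisely the incompatibility $n+1=\dim S+1\le k\le \operatorname{rk}F-l=n$ --- a two-line contradiction once the counting lemma is in place, and exactly the step you flag as your unresolved ``main obstacle.'' You correctly observe that case (5) with $n=3$ passes the Segre-class filter with $d=\alpha=2$, so without this count the case genuinely survives your argument.

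The same count is also what pins down the parameters in the cases you treat only heuristically. In case (1), the paper computes the dimension of the determinantal locus $S$ of matrices of deficient rank and compares it with $\operatorname{rk}F-n$: this is what yields $q=1$ in the subcase $F=\mathcal{O}(1)\oplus T_{\mathbb{P}^n}(-1)^{q}$ (from $(q-1)(n+2)\le (q-1)n+1$) and $n=3$, $k=6$ in the subcase $F=T_{\mathbb{P}^n}(-1)^{n}$ (from $(n-2)(n+3)\le n^2-n$), identifying Examples \ref{4} and \ref{3}; your ``should force'' step supplies no such bounds. Likewise in case (3) it is the inequality $\tfrac{n^2+n}{2}-10\le\tfrac{n^2-n}{2}-n+1$ that gives $n\le 4$ and $k\le 3$, hence Example \ref{2}. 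As written, your plan would need either to reproduce this counting lemma or to carry out a substantially harder uniform fibre analysis that you acknowledge you have not done, so the proposal has a genuine gap at the decisive step.
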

 \begin{proof}
 
 $(1)$ Suppose $F$ is an in $(1)$. Let $V_i=H^0(\mathbb{P}^n,{O}_{\mathbb{P}^n}(a_i)), W_j= H^0(\mathbb{P}^n,P_{\mathcal{O}(b_j)}), V=\oplus_iV_i, W=\oplus_j W_j$. $|\mathcal{O}_{\mathbb{P}(F)}|$ gives a morphism $\phi:\mathbb{P}(F)\to \mathbb{P}(V\oplus W) $. Let $S=\mathbb{P}(W)\hookrightarrow \mathbb{P}(V\oplus W).$ If $q=0$, then $\mathcal{E}(-1)$ is globally generated, hence nef, a contradiction. So, $q>0$, hence $c_n(F)\neq 0$. So, $l=n$ in the notations of Lemma \ref{lemma:cute}.

     \textbf{Case 1:} $\sum_i a_i+ \sum_j b_j>q+1$.

Using Lemma \ref{lemma:fiber}, note that for all $s\in S$, dim $\phi^{-1}(s)\geq n-q>n-(\sum_i a_i+ \sum_j b_j)+1=n-c_1(F)+1$. So, $S$ satisfies the hypothesis of Lemma \ref{lemma:cute}. By Lemma \ref{lemma:cute}, $\sum_j$dim $W_j=\text{dim }S+1\leq {rk }F-n= p+\sum_j$dim $W_j-q-n$, so $0\leq p-q-n$. But $n\geq \sum_i a_i + \sum_j b_j \geq p+q$. So, $q\leq 0$, a contradiction.

\textbf{Case 2:} $\sum_i a_i+ \sum_j b_j\leq q+1.$

We have $p+q\leq \sum_i a_i+ \sum_j b_j\leq q+1$. So, $p\leq 1$.

If $p=1,$ then $a_i=1=b_j$ for all $i,j$. If $p=0$, then either $b_j=1$ for all $j$, or $b_j=2$ for one $j$ and $b_j=1$ for all other $j$. So, $F=\mathcal{O}_{\mathbb{P}^n}(1)\oplus T_{\mathbb{P}^n}(-1)^q,T_{\mathbb{P}^n}(-1)^q $ or $T_{\mathbb{P}^n}(-1)^{q-1}\oplus P_{\mathcal{O}(2)}$.

\textbf{Subcase 1:} $F=\mathcal{O}_{\mathbb{P}^n}(1)\oplus T_{\mathbb{P}^n}(-1)^q$.

We can identify $W_j^*=k^{n+1}$ for all $j$, so $W^*=M_{q,n+1}$. Using Lemma \ref{lemma:fiber} one sees that $S=\{[0,A]\in \mathbb{P}(V\oplus W)| A\in W^*=M_{q,n+1},\text{ rk }A\leq q-1\}$ satisfies the hypothesis of Lemma \ref{lemma:cute}. We have dim $S+1=(q-1)(n+2)$, by the formula of dimension of determinantal variety, see for example the corresponding wikipedia page. Since $c_n(F)\neq 0$, we have $l=n$ in notation of Lemma \ref{lemma:cute}. By Lemma \ref{lemma:cute}, $(q-1)(n+2)\leq (q-1)n+1$, so $2(q-1)\leq 1$, So $q=1$, that is, $F=\mathcal{O}_{\mathbb{P}^n}(1)\oplus T_{\mathbb{P}^n}(-1)$ and $k\leq 1$ by Lemma \ref{lemma:cute}. Hence $X$ is as in example \ref{4}.

\textbf{Subcase 2:} $F=T_{\mathbb{P}^n}(-1)^q$.

We have $q\leq n$ by assumption. If $q<n$ then $s_n(\mathcal{E})=s_n(F)=0$, contradiction by Lemma \ref{lemma:segre}. So $q=n$. We can identify $W^*=M_{n,n+1}$ as in subcase 1. Using Lemma \ref{lemma:fiber} or the geometric description of $\phi$ in the proof of {\cite[Theorem 1]{galkin2022projective}}, one sees that $S=\{[A]\in \mathbb{P}(W)| A\in W^*=M_{n,n+1},\text{ rk }A\leq n-2\}$ satisfies the hypothesis of Lemma \ref{lemma:cute}. We have dim $S+1=(n-2)(n+3)$. Since $c_n(F)\neq 0$, we have $l=n$ in notation of Lemma \ref{lemma:cute}. By Lemma \ref{lemma:cute}, $(n-2)(n+3)\leq n^2-n$, so $n\leq 3$. So $n=3=q$, that is, $F= T_{\mathbb{P}^3}(-1)$. By Lemma \ref{lemma:cute} and its proof, we get $k= 6$ and $L\cap S=\varnothing$, where $L=\mathbb{P}(H^0(\mathcal{E}))\hookrightarrow \mathbb{P}(H^0(F))$ is the codimension $k$ linear subvariety. So, $X$ is as in example \ref{3}.

\textbf{Subcase 3:} $F=T_{\mathbb{P}^n}(-1)^{q-1}\oplus P_{\mathcal{O}(2)}$. 

By assumption, $q+1\leq n$. So, $s_n(\mathcal{E})=s_n(F)=0$, contradiction by Lemma \ref{lemma:segre}.

$(2)$ Suppose $F$ is as in $(2)$. So, $s_n(\mathcal{E})=s_n(F)=0$, contradiction by Lemma \ref{lemma:segre}.

$(3)$ Suppose $F$ is as in $(3)$. If $n$ is odd, $s_n(\mathcal{E})=s_n(F)=0$, contradiction by Lemma \ref{lemma:segre}. So $n$ is even.

As in \cite{galkin2022projective}, we can identify $H^0(F)=H^0(F)^*=Alt(n+1)$. By the geometric description of $\phi$ in the proof of {\cite[Theorem 2]{galkin2022projective}}, one sees that for all $s\in S:=\{[A]\in \mathbb{P}(Alt(n+1))| \text{ rk }A\leq n-4\}, \phi^{-1}(s)$ is a projective space of dimension $\geq 4$. So $S$ satisfies the hypothesis of Lemma \ref{lemma:cute}. We have dim $S+1=\frac{n^2+n}{2}-10$. Since $c_{n-1}(F)\neq 0, c_n(F)=0$, we have $l=n-1$ in notation of Lemma \ref{lemma:cute}. By Lemma \ref{lemma:cute}, $\frac{n^2+n}{2}-10\leq \frac{n^2-n}{2}-n+1$, so $n\leq 4$. As $n$ is even and $\geq 3$, we get $n=4$, and $k\leq 3$ by Lemma \ref{lemma:cute}. So, $X$ is as in example \ref{2}.

 $(4)$ Suppose $F$ is as in $(4)$. By Lemma \ref{lemma:cute}, $k\leq 1$. So $X$ is as in example \ref{5}.

 $(5)$ Suppose $F$ is as in $(5)$. By Lemma \ref{lemma:fiber}, for all $s\in S:=\mathbb{P}(H^0(T_{\mathbb{P}^n}(-1))\hookrightarrow \mathbb{P}(H^0(F))$, we have $\phi^{-1}(s)\cong \mathbb{P}^{n-1}$. So $S$ satisfies the hypothesis of Lemma \ref{lemma:cute}, as $n\geq 3$. We have dim $S+1=n+1$. Since $c_{n}(F)\neq 0$, we have $l=n$ in notation of Lemma \ref{lemma:cute}. By Lemma \ref{lemma:cute}, $n+1\leq n$, which is absurd.

 $(6)$ Suppose $F$ is as in $(6)$. Let $F_1=\wedge^2(T_{\mathbb{P}^n}(-1))$. By the same argument as in  {\cite[Theorem 2]{galkin2022projective}}, we can identify $H^0(F_1)=H^0(F_1)^*=Alt(n+1)$. Since $c_n(F)\neq0$, we have $l=n$ in notation of Lemma \ref{lemma:cute}.

 \textbf{Case 1:} $n$ is odd.

   By the same argument as in the geometric description of $\phi$ in the proof of {\cite[Theorem 2]{galkin2022projective}}, one sees that for all $s\in S:=\{[0,A]\in \mathbb{P}(H^0(\mathcal{O}(1)\oplus F_1))|A\in Alt(n+1), \text{ rk }A\leq n-3\}, \phi^{-1}(s)$ is a projective space of dimension $\geq 3$. So $S$ satisfies the hypothesis of Lemma \ref{lemma:cute}. We have dim $S+1=\frac{n^2+n}{2}-6$.  By Lemma \ref{lemma:cute}, $\frac{n^2+n}{2}-6\leq \frac{n^2-n}{2}-n+1$, so $n\leq 3$, hence $n=3$. But $\wedge^2(T_{\mathbb{P}^3}(-1))=\Omega_{\mathbb{P}^3}(2)$, so $F$ is an in $(5).$ We have already shown $X$ is as in the examples.

   \textbf{Case 2:} $n$ is even.

   By the same argument as in the geometric description of $\phi$ in the proof of {\cite[Theorem 2]{galkin2022projective}}, one sees that for all $s\in S:=\{[0,A]\in \mathbb{P}(H^0(\mathcal{O}(1)\oplus F_1))|A\in Alt(n+1), \text{ rk }A\leq n-2\}, \phi^{-1}(s)$ is a projective space of dimension $\geq 2$. So $S$ satisfies the hypothesis of Lemma \ref{lemma:cute}. We have dim $S+1=\frac{n^2+n}{2}-3$.  By Lemma \ref{lemma:cute}, $\frac{n^2+n}{2}-3\leq \frac{n^2-n}{2}-n+1$, so $n\leq 2$, a contradiction.

   $(7)$ Suppose $F$ is as in $(7)$. Let $F_1=\wedge^2(T_{\mathbb{P}^n}(-1))$. As in $(6)$ we can identify $H^0(F_1)=H^0(F_1)^*=Alt(n+1)$. Since $c_n(F)\neq0$, we have $l=n$ in notation of Lemma \ref{lemma:cute}.

   \textbf{Case 1:} $n$ is even.

   As we have seen in $(6)$, for all $s\in S:=\{[0,A]\in \mathbb{P}(H^0(T(-1)\oplus F_1))|A\in Alt(n+1), \text{ rk }A\leq n-2\}, \phi^{-1}(s)$ is a projective space of dimension $\geq 2$. So $S$ satisfies the hypothesis of Lemma \ref{lemma:cute}. We have dim $S+1=\frac{n^2+n}{2}-3$.  By Lemma \ref{lemma:cute}, $\frac{n^2+n}{2}-3\leq \frac{n^2-n}{2}$, so $n\leq 3$. A contradiction as $n$ is even and $n\geq 3$.

   \textbf{Case 2:} $n$ is odd.

By Euler exact sequence, identify $H^0(T(-1))=k^{n+1}.$
   Let $S=\{[\underline{\lambda}, A]\in \mathbb{P}(H^0(T(-1)\oplus H^0(F_1)))|\text{ rk }[A:\underline{\lambda}]\leq n-2\}=\{[\underline{\lambda}, A]\in \mathbb{P}(H^0(T(-1)\oplus H^0(F_1)))|\text{ rk }A\leq n-3\}$. As in Case $1$ of $(6)$, we get $n=3$. So, $F$ is as in $(5)$, which we already tackled.
\end{proof}

\textit{Proof of Theorem B:} We are as in the setup. Since $m\leq n+r-2$, we have $c_1\leq n$ by Lemma \ref{lemma:a=1}(2). Assume throughout that $\mathcal{E}$ is not as in example \ref{0}.

\underline{\textbf{(1):}} Let  $$0\to \mathcal{O}^k\to F\to \mathcal{E}\to 0$$ be a short exact sequence of vector bundles on $\mathbb{P}^n$ with $H^0(F^*)=H^1(F^*)=0$. By {\cite[Lemma 1.2]{anghel2013globally}} and Lemma \ref{lemma:trivsum}, there is always such a short exact sequence. Since $H^1(\mathbb{P}^n,\mathcal{O})=0$, this short exact sequence induces a short exact sequence on $H^0$. Since $\mathcal{O}^k$ and $\mathcal{E}$ are globally generated, an application of Snake Lemma shows $F$ is also globally generated. If $c_1(\mathcal{E})<n$ or if $c_1(\mathcal{E})=n$ and $F$ is decomposable, then since $c_1(F)=c_1(\mathcal{E})$ and $M_{n,c_1(\mathcal{E})}$ is true, we see that $F$ must be one of $(1)$-$(7)$ of Theorem \ref{theorem:cases}. So by Theorem \ref{theorem:cases}, $X$ is one of the examples.

Part \underline{\textbf{(2):}} will be proven in the end.

\textit{Proof of Theorem C:}  If $n\leq 2$ or $r=1$ then $X$ is one of the examples by \cite{szurek1990fano}, \cite{szurek1990fano1}, \cite{ancona1994fano}, \cite{szurek1990fano2}. So we assume $n\geq 3, r\geq 2.$ So $c_1(\mathcal{E}(-1))\leq 2$. If $\mathcal{E}(-1)$ is nef, then by{\cite[Lemma 3]{peternell2006numerically}}, $\mathcal{E}(-1)$ is globally generated. By {\cite[Theorem E]{bansal2024extremal}} (or by replacing $\mathcal{E}$ by $\mathcal{E}(-1)$ and proceeding with the proof below), $X$ is one of the examples. So assume $\mathcal{E}(-1)$ is not nef. Hence we are in the situation of the setup.  Assume throughout that $\mathcal{E}$ is not as in example \ref{0}.

\textbf{Case 1:} $\mathcal{E}$ is nonample.

Suppose $X$ is not one of the examples. Since $c_1\leq 5$, $M_{n,c_1}$ is true. By Theorem \ref{theorem:B}(1), we must have $c_1=n$, and if $0\to \mathcal{O}_{\mathbb{P}^n}^k\to F\to \mathcal{E}\to 0 $ a short exact sequence of vector bundles on $\mathbb{P}^n$ with $H^0(F^*)=H^1(F^*)=0$, then $F$ is indecomposable. Looking at the classification of globally generated vector bundles on $\mathbb{P}^n$ with first Chern class $\leq 5$ in \cite{anghel2013globally},\cite{anghel2020globally}, we see that one of the following cases occur:

\textbf{Subcase 1:} $F$ is as in $(xvi)$ of {\cite[Theorem 0.2]{anghel2013globally}}.

We have $s_4(\mathcal{E})=s_4(F)=0$ by a computation using exact sequences. Contradiction by Lemma \ref{lemma:segre}.

\textbf{Subcase 2:} $F$ is as in $(v)$ of {\cite[Theorem 0.1]{anghel2020globally}}.

We have $(-1)^is_i(\mathcal{E})=(-1)^is_i(F)=21$ for $3\leq i\leq 5$  by a computation using exact sequences. This is impossible by Lemma \ref{lemma:segre}.

\textbf{Subcase 3:} $F$ is as in $(vi)$ of {\cite[Theorem 0.1]{anghel2020globally}}.

We have $s_5(\mathcal{E})=s_5(F)=0$ by a computation using exact sequences. Contradiction by Lemma \ref{lemma:segre}.

\textbf{Case 2:} $\mathcal{E}$ is ample.

\textbf{Subcase 1:} $c_1\leq n-1.$

Since $c_1\leq 5$, $M_{n,c_1}$ is true. So, there is a short exact sequence of vector bundles $0\to \mathcal{O}_{\mathbb{P}^n}^k\to F\to \mathcal{E}\to 0 $ on $\mathbb{P}^n$ with $H^0(F^*)=H^1(F^*)=0$, and $F$ is one of $(1)$-$(7)$ as in Theorem \ref{theorem:cases}. By Theorem \ref{theorem:cases}, $X$ is as in the examples.

\textbf{Subcase 2:} $c_1= n=3.$

By {\cite[Theorem 0.1]{anghel2013globally}}, there is a short exact sequence of vector bundles $0\to \mathcal{O}_{\mathbb{P}^n}^k\to F\to \mathcal{E}\to 0 $ on $\mathbb{P}^n$ with $H^0(F^*)=H^1(F^*)=0$, and $F$ is one of $(1)$-$(7)$ as in Theorem \ref{theorem:cases}. By Theorem \ref{theorem:cases}, $X$ is as in the examples. But in each such case we have $\mathcal{E}$ nonample, a contradiction.

\textbf{Subcase 3:} $c_1= n=4.$

By {\cite[Theorem 0.2]{anghel2013globally}}, there is a short exact sequence of vector bundles $0\to \mathcal{O}_{\mathbb{P}^n}^k\to F\to \mathcal{E}\to 0 $ on $\mathbb{P}^n$ with $H^0(F^*)=H^1(F^*)=0$, and $F$ is one of $(1)$-$(7)$ as in Theorem \ref{theorem:cases}, or $F$ is as in $(xvi)$ of {\cite[Theorem 0.2]{anghel2013globally}}. In the first case, by Theorem \ref{theorem:cases}, $X$ is as in the examples. But in each such case we have $\mathcal{E}$ nonample, a contradiction. In the second case, we have We have $s_4(\mathcal{E})=s_4(F)=0$, so the top self-intersection number of $\mathcal{O}_{\mathbb{P}(\mathcal{E})}(1)$ is $0$, a contradiction as $\mathcal{O}_{\mathbb{P}(\mathcal{E})}(1)$ is ample.

\textbf{Subcase 4:} $c_1= n=5.$

Let $\mathcal{E}_1=\mathcal{E}\oplus\mathcal{O}_{\mathbb{P}^n}(1)$. So $\mathcal{E}_1$ is ample, $c_1(\mathcal{E}_1)=c_1(\mathbb{P}^n),$ rk $\mathcal{E}_1\geq 4 =n-1$. By {\cite[Theorem 0.1,0.2]{peternell1992fano}}, we see that $\mathcal{E}$ must be a split bundle. But then $\mathcal{E}(-1)$ is nef, a contradiction.

\textbf{Subcase 5:} $c_1= n+1.$

So $n\leq 4.$ We have rk $\mathcal{E}\geq 3\geq n-1$. If $n=4$, then rk $\mathcal{E}= 3$, and by {\cite[Theorem 7.4]{peternell1992fano}}, we see that we see that $X=\mathbb{P}(\mathcal{E})$ cannot be a smooth blow up, a contradiction. If $n=3$, by {\cite[Theorem 0.1]{peternell1992fano}}, we see that $X$ cannot be a smooth blow up, a contradiction.

\textbf{Subcase 6:} $c_1\geq n+2.$

As $n\geq 3, c_1\leq 5$, we must have $n=3, c_1=5$. Since $\mathcal{E}$ is ample, restricting $\mathcal{E}$ to a line in $\mathbb{P}^3$ we see that rk $\mathcal{E}\leq c_1(\mathcal{E})=5$. If rk $\mathcal{E}=5$, then restriction of $\mathcal{E}$ at every line must be $\mathcal{O}(1)^5$, hence restriction of $\mathcal{E}(-1)$ at every line is trivial. By {\cite[Theorem 3.2.1]{okonek1980vector}}, $\mathcal{E}(-1)$ is trivial, a contradiction as $\mathcal{E}(-1)$ is not nef. So, $3\leq $ rk $\mathcal{E}\leq 4$, that is, $r=2$ or $3$.

Suppose $r=3$, so by Lemma \ref{lemma:adivide}(ii), $a|{5\choose 3}=10$, so $a=2,5$ or $10$. So $r+1\nmid a+1$. So by Lemma \ref{lemma:a=1}(ii) we must have $n+r-m\geq 3$. By Lemma \ref{lemma:adivide}(iii), we have $a^2|{4\choose 3}=4$, so $a=2$. Since $0<b<a$, we have $b=1$. Now by Lemma \ref{lemma:adivide}(iv), we have $$4 \mid {5\choose 3}d+{5\choose 2}(1+d)-{5\choose 2}.10d,$$ hence $4|10d+10(1+d)=20d+10$, which is false. 

So, $r=2$. So, $m\leq n+r-2=3$. If $m\leq 2$, then by Lemma \ref{lemma:adivide}(iii), $a^2|{3\choose 3}=1$, contradiction to $a>1$. So, $m=3$. By Lemma \ref{lemma:a=1}(ii) and Lemma \ref{lemma:adivide}(ii), we have $3|a+1$ and $a|{4\choose 3}=4$. So, $a=2$. As $0<b<a$, we have $b=1$. By Lemma \ref{lemma:a=1}(v), we have $4=a^r|\alpha.$ So $Y$ is neither a projective space, nor a quadric in a projective space. By {\cite[Corollaries to Theorem 1.1 and 2.1]{kobayashi1973characterizations}}, we have $\tau\leq n+r-1=4.$ So by Lemma \ref{lemma:a=1}(ii), $\frac{d+3}{2}\leq 4$, so $d\leq 5$. Also, by Lemma \ref{lemma:adivide}(iv), $8|-50d+6$, so $4|d-3$. $d=3$ is the only possibility. So $d=3, \tau=3$, by Lemma \ref{lemma:a=1}(ii).

We have $0=EH_2^4=(-2H_1+3U)(-H_1+2U)^4=-48s_3-128s_2+608$, as $s_1=-c_1=-5$. So,
\begin{equation}
    3s_3+8s_2=38.
\end{equation} Also, $\alpha=H_2^5=(-H_1+2U)^5=-32s_3-80s_2+360$. So,
\begin{equation}
    2s_3+5s_2=\frac{45}{2}-\frac{\alpha}{8}.
\end{equation}

Solving equations $(3)$ and $(4)$ we get $s_2=9+\frac{3\alpha-8}{16}, s_3=-\frac{\alpha}{2}-10.$ Using equation $(1)$ we get $c_2=16-\frac{3\alpha-8}{16}, c_3=50-\frac{11\alpha}{8}.$

As $\mathcal{E}$ is ample. we have $c_3>0$. So, $\alpha\leq 36$. We also have $16|3\alpha-8$ as $c_2$ is an integer. Therefore $\alpha\equiv 8(\text{mod }16)$. So, $\alpha=8$ or $24$.

If $\alpha=24$, then $c_2=12,c_3=17$. But this is impossible as by {\cite[Equation 1.7]{anghel2018globally}}, $c_3-c_1c_2$ must be even.

If $\alpha=8$, then $c_2=15, c_3=39$. $\mathcal{G}:=P_{\mathcal{E}}$ is a globally generated vector bundle on $\mathbb{P}^3$ with $c_1(\mathcal{G})=5, c_2(\mathcal{G})=s_2(\mathcal{E})=10, c_3(\mathcal{G})=-s_3(\mathcal{E})=14$. But this is also impossible by {\cite[Theorem 0.1]{anghel2018globally}}.

So we get a contradiction.

 \textit{Proof of Theorem A:} 

 If $r=1$ or $c_1\leq 5$, then by \cite{ancona1994fano}, \cite{szurek1990fano} and Theorem \ref{theorem:C}, we see that $X$ must as in example 3. So we assume $r\geq 2, c_1\geq 6.$ By the already proven part of Theorem \ref{theorem:B}, $n\geq c_1\geq 6$.
 
 \underline{\textbf{(1):}} We have $d=2$, so $n$ is even, $c_1=\frac{n}{2}+1.$. As $c_1\geq 6$ we get $n\geq 10.$

 \textbf{Claim:} $n/2\leq r\leq n/2+3$.

 \begin{proof}
     By Lemma \ref{lemma:P} (iv), $n/2\leq r$. By Lemma \ref{lemma:a=1}(iii), $W$ is a quadratic variety in $\mathbb{P}^{n+r}$ in the sense of \cite{ionescu2013manifolds}. By \cite{ionescu2013manifolds} and Lemma \ref{lemma:ci}, we have $m\leq \frac{2}{3}(n+r)$, that is, $r\leq n/2+3.$
 \end{proof}

Let $r=\frac{n}{2}+g, 0\leq g\leq 3$. So, $m=\frac{n}{2}+g-1+\frac{n}{2}=n+g-1.$ 
By Lemma \ref{lemma:P} (ii), $P(t)=(1+t^{n/2})(1+t+\dots +t^{n/2+g-1})$. So, $a_i=1$ if $0\leq i\leq n/2-1$ or $n/2+g\leq i\leq m.$ Since $H^{2i}(W, \mathbb{Z})$ is free abelian by Lemma \ref{lemma:P}(iii), there are positive integers $l_i$ for integers $i\in [0,m]\setminus [n/2, n/2+g-1],$ such that $H^{2i}(W, \mathbb{Z})$ is generated by $H_2^i/l^i.$

Let $S=\{(k,i)\in \mathbb{Z}^2| 1\leq k\leq n+r, 0\leq i\leq min\{n/2-1, k-1\} \text{ and } k-i-1\in [0,m]\setminus [n/2,n/2+g-1]\}.$ In notations of {\cite[Theorem 7.31]{MR2451566}}, for $(k,i)\in S$, we have $H^{2k}(X, \mathbb{Z})\ni j_*\circ h^i\circ (\phi|_E)^*(H_2^{k-i-1}/l_{k-i-1})=j_*j^*((-E)^iH_2^{k-i-1}/l_{k-i-1})=(-1)^iH_2^{k-i-1}E^{i+1}/l_{k-i-1}=(-1)^iU^{k-i-1}(2U-H_1)^{i+1}/l_{k-i-1}.$ So,
\begin{equation*}
    l_{k-i-1}|U^{k-i-1}(2U-H_1)^{i+1} \text{ in }H^{2k}(X, \mathbb{Z}) \text{ for } (k,i)\in S.
\end{equation*}
 As $n\geq 10, 0\leq g\leq 3$, we have  $(n/2+g, g)\in S$. Since $n/2+g=r$ and $\{U^{r-i}H_1^i|0\leq i\leq r\}$ is a basis of the free abelian group $H^{2r}(X, \mathbb{Z})$, we get $l_{n/2-1}=1.$ By Poincare duality, $l_{n/2-1}l_{n/2+g-1}=l_m=e$, where $e$ is the degree of $W$ in $\mathbb{P}^{n+r}$. So, $l_{n/2+g}=e$.

 We also have $(n/2+g+1, 0)\in S$. So $$e| U^{n/2+g}(2U-H_1)=2U^{n/2+g+1}-U^{n/2+g}H_1$$ $$\hspace{50 pt}=2((n/2+1) U^rH_1+\sum_{i=2}^{r+1} (-1)^{i-1}c_iU^{r+1-i}H_1^i)-U^rH_1$$ $$\hspace{-10 pt}=(n+1)U^r H_1+\sum_{i=2}^{r+1}(-1)^{i-1}2c_iU^{r+1-i}H_1^i.$$ As $\{U^{r-i}H_1^{i+1} |0\leq i\leq r\}$ is a basis of the free abelian group $H^{2(r+1)}(X, \mathbb{Z})$, we have $e|n+1.$ As $W$ is nondegenerate by Lemma \ref{lemma:ci}, we have $e\geq \text{codim}(W)+1=n/2+1>(n+1)/2$. So we must have $e=n+1.$

As $n\geq 10$, we have $2\leq n/2+1\leq m-2.$ So $2\leq \text{ codim(W)}\leq m-2.$ Also, $e=n+1\geq n/2+4=\text{ codim }(W)+3$. By  {\cite[Proposition 2.4(2)]{park2024some}}, we have $n+1=e\geq 2\text{ codim }(W)+2=2(n/2+1)+2=n+4,$ a contradiction.
 
\underline{\textbf{(2):}} Suppose $d\geq 3$. By Lemma \ref{lemma:P} $(iv)$, we have $r\geq n/d$. Suppose $r\geq \frac{n}{d}+1$. So by Lemma \ref{lemma:a=1}(vi), we have $n+1\leq m+1.$ By Lemma \ref{lemma:a=1}(iv), $W$ is defined by $n+1$ equations in $\mathbb{P}^{n+r}$, and by Lemma \ref{lemma:ci}, $W$ is not a complete intersection. So, $(b)$ in {\cite[Main Theorem]{netsvetaev2006projective}} must be false. So, $m< \frac{3(n+r)}{4}-\frac{1}{2}.$ Hence, $\frac{d-1}{d}n+r-1<\frac{3}{4}(n+r)-\frac{1}{2}$, that is, $$ \Big(\frac{d-1}{d}-\frac{3}{4}\Big)n +\frac{r}{4}<\frac{1}{2}.$$ If $d\geq 4$, this implies $r<2$, a contradiction. So $d=3$ and $r/4<n/12 + 1/2.$ So, $r<n/3 + 2$. If $r=n/d+1$, then since $n\geq 6$, Lemma \ref{lemma:P}(ii) shows $a_{n/3}=2, a_{n/3+1}=1$. But by Hard Lefschetz theorem, the map $H^{2n/3}(W,\mathbb{C})\to H^{2n/3+2}(W,\mathbb{C})$ given by multiplication by a Kahler class in $H^2(W,\mathbb{C})$ is injective, so $a_{n/3}\leq a_{n/3+1}$, a contradiction. 

So, we have $r=n/d$. Now Lemma \ref{lemma:P}(ii) shows that $W$ has same Betti numbers as $\mathbb{P}^{n-1}.$ 

 \underline{\textbf{(3):}} Since Hartshorne's conjecture is true and by Lemma \ref{lemma:ci} $W$ is not a complete intersection, we have $m\leq\frac{2}{3}(n+r)$. Suppose the second case in $(2)$ holds, so $r=n/d, m=n-1$. So, $n-1\leq \frac{2(d+1)}{3d}n$, hence $2\leq r=n/d\leq \frac{3}{d-2}.$ This implies $d=3$ and $r=2$ or $3.$ If $r=2$, then $n=6$. So by Lemma \ref{lemma:a=1}, $ c_1=\frac{d-1}{d}n+1= 5$, a contradiction to our assumption that $c_1\geq 6$. So $r=3,n=9.$

So, $c_1=7=-s_1$. By Lemma \ref{lemma:segre} $s_9=-1, s_8=3, s_7=-9, s_6=27.$ Define integral polynomials $Q(t), R(t)$ by $Q(t)=1+7t+c_2t^2+c_3t^3+c_4t^4,$ $R(t)=1-7t+s_2t^2+s_3t^3+s_4t^4+s_5t^5.$ So, 
\begin{equation}
    Q(t)(R(t)+27t^6-9t^7+3t^8-t^9)\equiv 1(\text{mod } t^{10}).
\end{equation}
Hence,$$ Q(t)(R(t)-t^9)\equiv 1 (\text{mod } (3, t^{10})).$$ So, $Q(t)R(t)\equiv t^9+1\equiv (t+1)^9(\text{mod } (3, t^{10})).$ 

As $Q(t)R(t)-(t+1)^9$ has degree at most $9,$ we get $Q(t)R(t)\equiv (t+1)^2(\text{mod }3).$ As deg$Q\leq 4$ and deg$R\leq 5$, we must have $Q(t)\equiv (t+1)^4, R(t)\equiv (t+1)^5 (\text{mod }3)$, since $\mathbb{Z}/3\mathbb{Z}[t]$ is a unique factorization domain. So, there are $A(t), B(t)\in \mathbb{Z}[t]$ such that $Q(t)\equiv (1+t)^4+3A(t), R(t)\equiv (1+t)^5+3B(t)(\text{mod }9).$ 

By Equation $(6),$ $Q(t)(R(t)+3t^8-t^9)\equiv 1(\text{mod }(9, t^{10}))$. Using $c_1=7\equiv -2(\text{mod }9)$, we get $$Q(t)R(t)+3t^8-7t^9\equiv 1(\text{mod }(9,t^{10})).$$ So, $$Q(t)R(t)\equiv -2t^9-3t^8+1(\text{mod }(9,t^{10})).$$ As $Q(t)R(t)+2t^9+3t^8-1$ has degree at most $9$, we get $$Q(t)R(t)\equiv -2t^9-3t^8+1(\text{mod }9).$$
So, $$((1+t)^4+3A(t)))((1+t)^5+3B(t))\equiv -2t^9-3t^8+1(\text{mod }9).$$

Hence, $$-2t^9-3t^8+1\equiv0(\text{mod }(9,(1+t)^2)).$$ So, $(1+t)^2|-2t^9-3t^8+1=(1+t)S(t)$ in $\mathbb{Z}/9\mathbb{Z}[t]$, where $S(t)=-2t^8-t^7+t^6-t^5+t^4-t^3+t^2-t+1.$ As $1+t$ is a nonzerodivisor in $\mathbb{Z}/9\mathbb{Z}[t]$, we get $1+t|S(t)$ in $\mathbb{Z}/9\mathbb{Z}[t]$. So, $S(-1)\equiv 0(\text{mod }9).$ But $S(-1)=6$, a contradiction.

\begin{remark}
    It might be possible to prove that the 2nd case in Theorem \ref{theorem A}(2) is not possible without assuming Hartshorne's conjecture, using a calculation generalising the calculation in our proof of part $(3)$.
\end{remark}

\textit{Proof of Theorem B:} \underline{\textbf{(2):}} As in the beginning of the proof of Theorem \ref{theorem A}, it suufices to assume $n\geq c_1\geq 6$. Suppose rk $\mathcal{E}\geq n+1$, that is, $r\geq n$. So, $Y$ is covered by linear subvarieties $\phi(\pi^{-1}(x))$($x\in \mathbb{P}^n)$ of dimension $r\geq \frac{n+r}{2}.$  Since $Y$ has Picard rank $1$, by {\cite[Main Theorem]{sato1997projective}} and {\cite[Corollary 5.3]{novelli2011projective}}, one of the $3$ cases can occur:

\textbf{Case 1:} $Y$ is a projective space.

By Theorem \ref{theorem A} we are done.

\textbf{Case 2:} $Y=Gr(2, n+2).$

As in {\cite[\S 6]{sato1997projective}}, $\mathbb{P}^{n+1}$ gives a family of $n$-planes in $Y$ with universal family $\mathbb{P}_{\mathbb{P}^{n+1}}(\Omega_{\mathbb{P}^{n+1}}(2)).$ Since $\Omega_{\mathbb{P}^{n+1}}(2)$ and $\mathcal{E}$ both has first Chern class $n$, the family of $n$-planes in $Y$ given by $\phi(\pi^{-1}(x))$($x\in \mathbb{P}^n)$ is induced by a linear map $f:\mathbb{P}^{n}\to \mathbb{P}^{n+1}$. In other words, $\mathcal{E}=f^*(\Omega_{\mathbb{P}^{n+1}}(2))=\mathcal{O}_{\mathbb{P}^n}(1) \oplus \Omega_{\mathbb{P}^n}(2).$ So we are as in example \ref{5}.

\textbf{Case 2:} $Y=Q_{2n},$ the $2n$-dimensional smooth quadric.

Quotienting $\mathcal{E}$ by a general section, we get a nef but nonample vector bundle $\mathcal{E}_1$ of rank $n$ such that $\mathbb{P}(\mathcal{E}_1)$ is blow up of $Q_{2n-1}$ along a smooth codimension $2$ subvariety $W_1$. Since $Q_{2n-1}$ and $\mathbb{P}_{2n-1}$ have the same Betti numbers, the same argument as in Lemma \ref{lemma:P} (ii) shows $a_1=2.$ But $W_1$ is a smooth codimension $3$ subvariety of $\mathbb{P}_{2n}$, so by Barth-Larsen theorem we get $2>2n-6$, hence $n\leq 3$, a contradiction.

\begin{remark} \label{rmk}
A similar proof as in Theorem \ref{theorem:B}(1) shows the following:

Let $n,r$ be positive integers, $X$ a smooth projective variety which is a $\mathbb{P}^r$-bundle over $\mathbb{P}^n$, and has another projective bundle structure over some smooth projective variety $Y$. Let $\mathcal{E}$ be the unique nef vector bundle of rank $r+1$  over $\mathbb{P}^n$ such that $X\cong \mathbb{P}_{\mathbb{P}^n}(\mathcal{E})$ over $\mathbb{P}^n$ and $\mathcal{E}(-1)$ is not nef. Suppose $\mathcal{E}$ is not ample. Then $c_1(\mathcal{E})\leq n-1$ and the following holds:
     Suppose $\mathcal{E}$ is globally generated, equivalently, the ample generator of $Y$ is globally generated. If $M_{n,c_1(\mathcal{E})}$ is true, then $\mathcal{E}$ is either trivial, or $T_{\mathbb{P}^n}(-1)$, or $\Omega_{\mathbb{P}^n}(2)$, or a null-correlation bundle.
 \end{remark}

\section{Acknowledgement}
I am grateful to Professor János Kollár for giving valuable ideas, guidence and references. I also thank Varun Rao, Shivam Vats and Ashima Bansal for helping me with Latex.
\printbibliography
\end{document}